\documentclass[journal,dvipsnames]{IEEEtran}

\pdfminorversion=4

\usepackage{pgfplots}


\usepackage{amsmath} 
\usepackage{amssymb}  
\usepackage[bottom]{footmisc}
\usepackage{graphicx}
\usepackage{epstopdf}
\usepackage{mathtools}
\usepackage{dsfont}
\usepackage{url}
\usepackage{color}
\usepackage{booktabs} 
\usepackage{multirow}   
\usepackage{balance}    
\usepackage{soul}   
\usepackage{hyperref}   
\usepackage{tikz}
\usepackage{siunitx}
\usepackage{xcolor}
\usepackage{cellspace}
\setlength{\cellspacetoplimit}{8pt}
\setlength{\cellspacebottomlimit}{8pt}

\usepackage{hyperref}

\DeclareMathOperator*{\argmax}{argmax}

\usepackage{balance}    
\usepackage{amsthm}
\def\sq{\mathbin{{\strut\rule{1.25ex}{1.25ex}}}}

\usepackage{balance}    
\usepackage{amsthm}
\usepackage{xcolor}
\usepackage{tcolorbox}
\usepackage{dsfont}
\usepackage[noend]{algorithm}

\makeatletter
\newcommand{\removelatexerror}{\let\@latex@error\@gobble}
\renewcommand{\fnum@figure}{Figure \thefigure}
\makeatother




\newcommand{\beq}{\begin{equation}}
\newcommand{\eeq}{\end{equation}}

\usepackage{bm}

\newcounter{algorithmctr}[section]
\renewcommand{\thealgorithmctr}{\thesection.\arabic{algorithmctr}}
{\refstepcounter{algorithmctr}\begin{list}{}{%
\setlength{\rightmargin}{0\linewidth}%
\setlength{\leftmargin}{.05\linewidth}
\setlength{\itemsep}{1pt}
\setlength{\parskip}{0pt}
\setlength{\parsep}{0pt}}%
\rmfamily\small
\item[]{\setlength{\parskip}{0ex}\hrulefill\par%
\nopagebreak{\bfseries\textsf{Algorithm \thealgorithmctr~}}}}%
{{\setlength{\parskip}{-1ex}\nopagebreak\par\hrulefill} \end{list}}
\IEEEoverridecommandlockouts

\usepackage[noadjust]{cite} 
\usepackage{amsmath,stmaryrd,graphicx}

\newtheoremstyle{boldStyle}
  {\topsep}
  {\topsep}
  {\itshape}
  {0pt}
  {\bfseries}
  {.}
  { }
  {\thmname{#1}\thmnumber{ #2}\thmnote{ (#3)}}

\newtheoremstyle{italicStyle}
  {\topsep}
  {\topsep}
  {}
  {0pt}
  {\bfseries}
  {.}
  { }
  {\thmname{#1}\thmnumber{ #2}\thmnote{ (#3)}}

\theoremstyle{boldStyle}

\newtheorem{theorem}{Theorem}
\newtheorem{lemma}{Lemma}
\newtheorem{proposition}{Proposition}

\newtheorem{corollary}{Corollary}

\theoremstyle{italicStyle}
\newtheorem{assumption}{Assumption}
\newtheorem{remark}{Remark}

\renewenvironment{proof}{{\textbf{Proof:}}}{\hfill$\sq$}

\newcommand{\FinTAc}{F^{\mathrm{in},\mathrm{active}}_{t:t+T-1}}
\newcommand{\GeqT}{G^{\mathrm{eq}}_{t:t+T-1}}
\newcommand{\FinT}{F^{\mathrm{in}}_{t:t+T-1}}
\newcommand{\GeqNm}{G^{\mathrm{eq}}_{t:t+N-2}}
\newcommand{\GeqN}{G^{\mathrm{eq}}_{t:t+N-1}}
\newcommand{\FinN}{F^{\mathrm{in}}_{t:t+N-1}}

\newcommand{\FinNAcm}{F^{\mathrm{in},\mathrm{active}}_{t:t+N-2}}
\newcommand{\MatT}{M_{t:t+N-1}}
\newcommand{\MatTm}{M_{t:t+N-2}}
\newcommand{\MatTmp}{M_{t+1:t+N-1}}
\newcommand{\LagEq}{\boldsymbol{\lambda}^*}
\newcommand{\LagIn}{\boldsymbol{\delta}^{*,\textrm{active}}}
\usepackage[noend]{algpseudocode}
\usepackage{dsfont}
\usepackage{algorithm}

\usepackage{xcolor}
\usepackage{tcolorbox}

\definecolor{ugoColor}{rgb}{0.6,0.8,0.0}
\definecolor{ligthGray}{rgb}{0.95,0.95,0.95}

\usepackage{hyperref}

\definecolor{color1}{HTML}{003f5c}
\definecolor{color2}{HTML}{7a5195}
\definecolor{color3}{HTML}{ef5675}
\definecolor{color4}{HTML}{ffa600}

\pgfplotscreateplotcyclelist{foo}{
        {color1,mark=*,mark options={fill=white}, thick},
        {color4,mark=square*,mark options={fill=white}, thick},
        {color2,mark=diamond*,mark options={fill=white}, thick},
        {color3,mark=triangle*,mark options={fill=white}, thick},
      }

 \pgfplotsset{mytikzstyle/.style={
     every axis/.append style={
         legend style={
             draw = none,
             },
         },
        ylabel near ticks,         
        axis x line=bottom,
        axis y line=left,
        enlargelimits=false,
        ymajorgrids=true,
        width=\columnwidth,
        cycle list name=foo,
     },}
 \pgfplotsset{mytikzstyle}

\title{\LARGE \bf On the Optimality and Convergence Properties of the \\ Iterative Learning Model Predictive Controller  }

\author{Ugo Rosolia, Yingzhao Lian, Emilio T. Maddalena, Giancarlo Ferrari-Trecate, Colin N. Jones
\thanks{This work has received support from the Swiss National Science Foundation under the RISK project (Risk Aware Data-Driven Demand Response), grant number 200021 175627. Ugo Rosolia is with Caltech, USA.
Yingzhao Lian, Emilio T. Maddalena, Giancarlo Ferrari-Trecate, Colin N. Jones are with the Automatic Control Laboratory, École polytechnique fédérale de Lausanne (EPFL) E-mails:\tt\scriptsize{\{ugo.rosolia\}@gmail.com.}, \tt\scriptsize{\{yingzhao.lian, emilio.maddalena, giancarlo.ferraritrecate, colin.jones\}@epfl.ch.}}%
}

\begin{document}

\maketitle
\thispagestyle{empty}
\pagestyle{empty}
\begin{abstract}
In this technical note we analyse the performance improvement and optimality properties of the Learning Model Predictive Control (LMPC) strategy for linear deterministic systems. The LMPC framework is a policy iteration scheme where closed-loop trajectories are used to update the control policy for the next execution of the control task. We show that, when a Linear Independence Constraint Qualification (LICQ) condition holds, the LMPC scheme guarantees strict iterative performance improvement and optimality, meaning that the closed-loop cost evaluated over the entire task converges asymptotically to the optimal cost of the  infinite-horizon control problem. Compared to previous works this sufficient LICQ condition can be easily checked, it holds for a larger class of systems and it can be used to adaptively select the prediction horizon of the controller, as demonstrated by a numerical example.
\end{abstract}
 
\section{Introduction}

Model Predicitve Control (MPC) is an established control methodology which systematically uses forecast to compute control actions~\cite{scokaert1998constrained, rawlings2000tutorial, grieder2004computation, borrelli2017predictive}. In MPC at each time step, a \textit{model} is used to predict the evolution of the system over a time horizon. A sequence of control actions is chosen such that the predicted trajectory safely drives the system from the current measured state to a \textit{safe set} and it minimizes the predicted cost over the prediction horizon and the future cost given by a \textit{value function}. The MPC policy applies the first predicted input to the system, and the process is repeated at the next time step based on the new measurement. 
Computing the safe set and the value function to maximize the region of attraction of the controller and to guarantee optimal closed-loop performance is hard.
To guarantee these properties the safe set should be the \textit{maximal stabilizable set} (i.e., a set containing all states from which the control task can be executed~\cite{mayne2000constrained, borrelli2017predictive})
and the value function should map each state of the safe set to the cumulative cost associated with the \textit{optimal policy}~\cite{mayne2000constrained}.

Several strategies have been proposed to approximate the safe set and value function used for MPC design. In the control community, these approximation strategies are based on interpolation techniques which leverage closed-loop trajectories and knowledge of the system dynamics~\cite{bacic2003general,scibilia2011feasible, limon2005enlarging, brunner2013stabilizing, blanchini2003relatively,korda2016controller}. The resulting controllers guarantee constraint satisfaction and closed-loop stability. The approximation of the optimal value function is studied in Approximate Dynamic Programming (ADP) and Reinforcement Learning (RL). In ADP and RL the value function estimate is computed either fitting Monte Carlo estimate of the cumulative cost or minimizing the temporal difference associated with stored historical data~\cite{bertsekas1996neuro,bertsekas2019feature,sutton1988learning,bradtke1996linear}.

In this note, we focus on the learning model predictive control (LMPC) technique presented in~\cite{LMPC} and specialized for linear dynamics in~\cite{LMPC_linear}. In this framework, 
the control task is performed iteratively and the terminal safe set and value function are constructed using historical data collected from previous iterations of the control task. The asymptotic properties of this iterative procedure were studied in the original work~\cite{LMPC}, where the authors established that the closed-loop cost is non-increasing as iterations progress. Moreover, it was shown that if the algorithm attains a fixed-point, and a specific set containment condition is met, then the closed-loop trajectory at convergence is the optimizer of a constrained problem with an arbitrarily long but finite horizon. 

Our contributions in this note are twofold.  First, we show that when the Linear Independence Constraint Qualification (LICQ) condition holds and the closed-loop costs of two subsequent iterations are equal, then the closed-loop trajectory is optimal for the \textit{infinite-horizon} optimal control problem. Second, we leverage this result to guarantee \textit{strict} performance improvement at each iteration of the control task and, therefore, convergence of the LMPC algorithm to a fixed-point which is optimal for the infinite-horizon optimal control problem.
From a practical viewpoint, the proposed LICQ condition is simple to verify after the algorithm has converged and it can be employed by users to guide the selection of the LMPC prediction horizon.

\textit{Notation:} 
A polyhedron $P$ is an intersection of a finite number of half-spaces and a ball of radius $\epsilon$ is denoted as $\mathcal{B}(\epsilon) = \{x \in \mathbb{R}^2  ~ | ~ ||x||_2 \leq \epsilon \} $. As usual in calculus, infinite sums are to be interpreted in the limit sense, i.e., $\sum_{k=0}^{\infty} f(k) \coloneqq \lim_{n \rightarrow \infty} \sum_{k=0}^{n} f(k)$. The convex hull $\text{Conv}(\cdot)$ of a countable set of points is the smallest closed convex set containing them. The identity matrix is represented by $I_N \in \mathbb{R}^{N\times N}$ and the vector of ones, by $\mathds{1}_N \in \mathbb{R}^N$. A matrix $M \succ (\succeq) 0$ is positive (semi) definite and $\otimes$ denotes the Kronecker product. 
Finally, given $m$ vectors $v_1, \ldots, v_m$ in $\mathbb{R}^n$ we define the vector $\text{Vec}(v_1, \ldots, v_m) = [v_1^\top, \ldots, v_m^\top]^\top \in \mathbb{R}^{nm}$. 

\section{Problem Formulation}
Consider the linear deterministic system
\begin{equation}\label{eq:sys}
    x_{t+1} = A x_t + B u_t,
\end{equation}
where at time $t$ the state $x_t \in \mathbb{R}^n$ and the input $u_t \in \mathbb{R}^d$. The system is subject to the following state and input constraints:
\begin{equation}\label{eq:constraints}
\begin{aligned}
    x_t \in \mathcal{X}&=\{x\in \mathbb{R}^n: F_x x\leq b_x\}, \\ u_t \in \mathcal{U}&=\{u\in \mathbb{R}^d: F_u u\leq b_u\},
\end{aligned}
\end{equation}
which are assumed to be polyhedra containing the origin in their interior and should be satisfied for all $t\geq0$.

Our goal is to find a control policy $\pi^*(\cdot)$ which solves the following infinite-time optimal control problem:
\begin{equation}\label{eq:inf_OCP}
\begin{aligned}
        J_{0 \rightarrow \infty}^*(x_S) = \min_{u_0,u_1,\ldots} \quad & \sum_{t=0}^\infty h(x_t, u_t) \\
    \text{subject to} ~~\quad & x_{t+1} = A x_t + B u_t,~\forall t \geq 0, \\
    &x_t \in \mathcal{X}, u_t \in \mathcal{U},~\forall t \geq 0, \\
    &x_0 = x_S,
\end{aligned}
\end{equation}
where $x_S$ is a known initial condition and the running cost $h(x, u)$ fulfills the following assumption.
\begin{assumption}\label{ass:costStrictCovexity}
The function $h(x, u) = x^\top Q x + u^\top R u$ for the matrices $Q \succeq 0$ and $R \succ 0$.
\end{assumption}

\section{Learning Model Predictive Control}
In this section, we recall the iterative Learning Model Predictive Control (LMPC) strategy~\cite{LMPC,LMPC_linear}. We approximate the solution to problem~\eqref{eq:inf_OCP} by iteratively performing the regulation task from the initial condition $x_S$. 
At each iteration of the control task $j$, we store closed-loop trajectories and input sequences 
\begin{equation}\label{eq:cl_data}
    \boldsymbol{x}^j = [x_0^j, x_1^j, 
    \ldots] \text{ and } \boldsymbol{u}^j = [u_0^j, u_1^j, 
    \ldots],
\end{equation}
where $x_t^j \in \mathbb{R}^n$ and $u_t^j \in \mathbb{R}^d$ are the state and input of system~\eqref{eq:sys} at time $t$ of the $j$-th iteration. 
In the following we show that the above trajectories can be used iteratively to synthesize a predictive control policy. 

\subsection{Safe Set and Value Function Approximation}

Given $j$ stored closed-loop trajectories over an infinite horizon, we define the convex safe set at iteration $j$ as
\begin{equation}\label{eq:CS}
    \mathcal{CS}^j = \text{Conv} \Big( \bigcup_{i = 0}^j \bigcup_{t = 0}^\infty x_t^i \Big).
\end{equation}
The above convex safe set represents the convex hull of the stored trajectories and it is a control invariant set, if the stored trajectories converge to the origin~\cite{LMPC_linear}.

Next, we construct 
an approximation to the value function over the convex safe set. First, we compute the cost-to-go associated with the stored state $x_t^i$ as 
\begin{equation}\label{eq:realizedCost}
    J_{t\rightarrow \infty}^i (x_t^i)  = \sum_{k=t}^\infty h(x_k^i, u_k^i).
\end{equation}
Finally, we define the following $V$-function:
\begin{equation}\label{eq:Qfun}
\begin{aligned}
    V^j(x) = \min_{\gamma_t^i \geq 0 } \quad & \sum_{i=0}^j \sum_{k=0}^\infty \gamma_k^i J_{k\rightarrow \infty}^i (x_k^i) \\
\text{subject to} \quad & \sum_{i=0}^j \sum_{k=0}^\infty \gamma_k^i x_k^i = x,~\sum_{i=0}^j \sum_{k=0}^\infty \gamma_k^i = 1, 
\end{aligned}
\end{equation}
which interpolates the cost-to-go over the convex safe~\eqref{eq:CS}. As shown in~\cite{LMPC_linear}, the above $V$-function is a control Lyapunov function for the linear deterministic system~\eqref{eq:sys}. 

\begin{assumption}\label{ass:stored} At iteration $j=0$, we are given the closed-loop trajectory ${\boldsymbol{x}}^0= [x_0^0,\ldots, x_t^0, \ldots]$ and associated input sequence ${\boldsymbol{u}}^0= [u_0^0,\ldots, u_t^0, \ldots]$. Moreover, the state $x_t^0 \in \mathcal{X}$ and the input $u_t^0 \in \mathcal{U}$ for all $t\geq0$. Finally, $\lim_{t\rightarrow \infty} x_t^0 = 0$, $\lim_{t\rightarrow \infty} u_t^0 = 0$ and $J^0_{0\rightarrow\infty}(x_s) = \sum_{t=0}^\infty h(x_t^0,u_t^0) < \infty$.
\end{assumption}

\subsection{LMPC Policy}
In this section, we describe the LMPC design, where the $V$-function and the convex safe set are used respectively as terminal cost and terminal constraint. In particular, at time $t$ we solve the following finite-time optimal control problem:
\begin{equation}\label{eq:FTOCP}
    \begin{aligned}
        J_{t\rightarrow t+N}^{\scalebox{0.4}{LMPC},j}(x_t^j)=\quad\quad\quad&\\
        \min_{u_{t|t},\ldots,u_{t+N-1|t}}\quad & \sum_{k=t}^{t+N-1}  h(x_{k|t},u_{k|t}) + V^{j-1}(x_{t+N|t}) \\
\text{subject to}\quad  \quad \quad & x_{k+1|t}=Ax_{k|t}+Bu_{k|t}, \\
   & x_{k|t} \in \mathcal{X}, ~ u_{k|t} \in \mathcal{U}, \\
   &x_{t+N|t} \in ~\mathcal{CS}^{j-1}, \\
   &x_{t|t}=x_t^j,\\
   &\forall k \in \{t, \ldots, t+N-1\},
    \end{aligned}
\end{equation}
whose solution steers the system from the current state $x_t^j$ to the convex safe set $\mathcal{CS}^{j-1}$ constructed using the stored $j-1$ trajectories. Let $[u^{j,*}_{t|t},\ldots,u^{j,*}_{t+N-1|t}]$ be the optimal solution to problem~\eqref{eq:FTOCP}, we apply to system~\eqref{eq:sys} the following input:
\begin{equation}\label{eq:policy}
\pi^j(x_t^j) = u^{j,*}_{t|t}.
\end{equation}
The finite-time optimal control problem~\eqref{eq:FTOCP} is solved at time $t+1$, based on the new state $x_{t+1|t+1} = x_{t+1}^j$, yielding a \textit{moving} or \textit{receding horizon} control strategy.

\subsection{Properties}
In this section we recall the closed-loop properties of the iterative LMPC strategy from~\cite{LMPC_linear}.

\begin{proposition}[Theorem~1 in \cite{LMPC_linear}]\label{prop:recFeasibility}
Consider system~\eqref{eq:sys}~controlled by the LMPC policy~\eqref{eq:policy}. Let Assumptions~\ref{ass:costStrictCovexity}--\ref{ass:stored} hold, then the problem~\eqref{eq:FTOCP} is feasible for all $t \geq 0$ and at every iteration $j\geq1$.
Moreover, the origin is asymptotically stable for the closed-loop system~\eqref{eq:sys} and~\eqref{eq:policy} at every iteration $j\geq1$.
\end{proposition}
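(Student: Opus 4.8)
The plan is to argue by induction on the iteration index $j$, and, for each fixed $j$, to follow the classical two-step route of model predictive control: first recursive feasibility of~\eqref{eq:FTOCP} by induction on time $t$, then asymptotic stability by exhibiting the optimal cost of~\eqref{eq:FTOCP} as a Lyapunov function for the closed loop. The inductive hypothesis over $j$ is that the stored trajectory $\boldsymbol{x}^{j-1}$ satisfies the conditions of Assumption~\ref{ass:stored}, i.e. it obeys the constraints, converges to the origin and has finite cost; this is precisely what guarantees, via~\cite{LMPC_linear}, that $\mathcal{CS}^{j-1}$ is control invariant and that $V^{j-1}$ is a control Lyapunov function for~\eqref{eq:sys}. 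The base case $j=1$ holds by Assumption~\ref{ass:stored} applied to $\boldsymbol{x}^0$. Throughout I would also use convexity of $\mathcal{X}$ and $\mathcal{U}$ and the inclusion $\mathcal{CS}^{j-1}\subseteq\mathcal{X}$.

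For recursive feasibility at a fixed $j$, the base case $t=0$ uses the candidate input sequence $[u_0^{j-1},\ldots,u_{N-1}^{j-1}]$ borrowed from the previous iteration: since $x_0^j = x_S = x_0^{j-1}$ and the dynamics are deterministic, the predicted states coincide with the stored states $[x_0^{j-1},\ldots,x_N^{j-1}]$, which lie in $\mathcal{X}$ with inputs in $\mathcal{U}$, and the terminal state $x_N^{j-1}$ is itself a stored point, hence belongs to $\mathcal{CS}^{j-1}$. For the inductive step, suppose~\eqref{eq:FTOCP} is feasible at $t$ with optimizer $[u^{j,*}_{t|t},\ldots,u^{j,*}_{t+N-1|t}]$ and predicted states $[x^{j,*}_{t|t},\ldots,x^{j,*}_{t+N|t}]$, and that the closed loop moves to $x_{t+1}^j=x^{j,*}_{t+1|t}$. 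I would build the shifted candidate $[u^{j,*}_{t+1|t},\ldots,u^{j,*}_{t+N-1|t},\tilde{u}]$, where $\tilde{u}\in\mathcal{U}$ is the input furnished by control invariance of $\mathcal{CS}^{j-1}$ at the point $x^{j,*}_{t+N|t}\in\mathcal{CS}^{j-1}$, so that $A x^{j,*}_{t+N|t}+B\tilde{u}\in\mathcal{CS}^{j-1}$. The first $N-1$ predicted states were already verified feasible at time $t$, the appended step keeps the new terminal state in $\mathcal{CS}^{j-1}\subseteq\mathcal{X}$, and $\tilde{u}\in\mathcal{U}$; hence~\eqref{eq:FTOCP} is feasible at $t+1$, and induction gives feasibility for all $t\geq0$.

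For asymptotic stability I would use the optimal cost of~\eqref{eq:FTOCP}, as a function of the measured state, for a Lyapunov function. Positive definiteness and the upper bound near the origin follow from $Q\succeq0$, $R\succ0$ (Assumption~\ref{ass:costStrictCovexity}) together with the control Lyapunov structure of $V^{j-1}$. The decisive step is the per-stage decrease: evaluating the objective of~\eqref{eq:FTOCP} at the shifted candidate above and invoking the control Lyapunov inequality $h(x^{j,*}_{t+N|t},\tilde{u})+V^{j-1}\!\left(A x^{j,*}_{t+N|t}+B\tilde{u}\right)\leq V^{j-1}(x^{j,*}_{t+N|t})$ telescopes the terminal cost, bounding the candidate cost at $t+1$ by the optimal cost at $t$ minus the stage cost $h(x_t^j,u^{j,*}_{t|t})$. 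Since the candidate is only suboptimal at $t+1$, this yields
\begin{equation*}
J_{t+1\rightarrow t+1+N}^{\scalebox{0.4}{LMPC},j}(x_{t+1}^j) \;\leq\; J_{t\rightarrow t+N}^{\scalebox{0.4}{LMPC},j}(x_{t}^j) \;-\; h(x_t^j,u^{j,*}_{t|t}),
\end{equation*}
so the cost strictly decreases whenever the state and input are nonzero, which together with the bounds gives asymptotic stability of the origin. Finally, asymptotic stability and finiteness of the resulting closed-loop cost show that $\boldsymbol{x}^{j}$ again satisfies the inductive hypothesis, closing the induction over $j$.

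The main obstacle I anticipate concerns the infinite-horizon nature of the objects involved: the convex hull in~\eqref{eq:CS} and the interpolation in~\eqref{eq:Qfun} range over countably infinite point sets, so one must ensure that $\mathcal{CS}^{j-1}$ and $V^{j-1}$ are well defined and that their control-invariance and control-Lyapunov properties genuinely hold despite the minimization in~\eqref{eq:Qfun} being over infinitely many weights. These facts are the substance of~\cite{LMPC_linear} and I would invoke them directly. A secondary subtlety is that $Q$ is only positive semidefinite, so the strict stage-cost decrease is driven by the $R\succ0$ term; concluding convergence of the full state (rather than merely of the input) requires the control Lyapunov structure of $V^{j-1}$, which encodes the necessary detectability through the stored cost-to-go~\eqref{eq:realizedCost}.
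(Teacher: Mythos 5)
The paper does not prove this proposition itself; it imports it as Theorem~1 of \cite{LMPC_linear}, whose proof is precisely the argument you outline: feasibility at $t=0$ from the stored iteration-$(j-1)$ trajectory, recursive feasibility via the shifted candidate and control invariance of $\mathcal{CS}^{j-1}$, and stability from the LMPC optimal cost as a Lyapunov function using the control-Lyapunov (cost-decrease) property of $V^{j-1}$. Your proposal is correct and takes essentially the same route as the cited proof, and your closing remark about $Q\succeq 0$ correctly identifies the one point where the terminal-cost structure, rather than the stage cost alone, must be invoked to conclude convergence of the state.
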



\begin{proposition}[Theorem~2 in \cite{LMPC_linear}]
\label{prop2}
Consider system~\eqref{eq:sys} controlled by the LMPC policy~\eqref{eq:policy} and let Assumptions~\ref{ass:costStrictCovexity}--\ref{ass:stored} hold. Then the iteration cost $J_{0 \rightarrow \infty}^{j}(\cdot)$ does not increase with the iteration index $j$ and for all 
$t \geq 0$ we have that
\begin{equation*}
    J_{0\rightarrow \infty}^{j-1}(x_S) \geq \sum_{k=0}^{t-1}h(x_k^j,u_k^j)+ J_{t\rightarrow t+N}^{\scalebox{0.4}{LMPC},j}(x_t^j) \geq J_{0\rightarrow \infty}^{j}(x_S)
\end{equation*}
and $J_{t\rightarrow t+N}^{\scalebox{0.4}{LMPC},j}(x_t^j) \geq h(x_t^j, u_t^j) + J_{t+1 \rightarrow t+1+N}^{\scalebox{0.4}{LMPC},j}(x_{t+1}^j)$.
\end{proposition}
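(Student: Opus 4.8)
The plan is to build both inequalities on top of a single workhorse fact: that the terminal value function $V^{j-1}$ is a control Lyapunov function on $\mathcal{CS}^{j-1}$. First I would establish this. Fix any $x \in \mathcal{CS}^{j-1}$ and let $\{\gamma_k^i\}$ be optimal multipliers for the program defining $V^{j-1}(x)$ in~\eqref{eq:Qfun}, so that $x = \sum_{i,k}\gamma_k^i x_k^i$ and $V^{j-1}(x) = \sum_{i,k}\gamma_k^i J_{k\rightarrow\infty}^i(x_k^i)$. I would then propose the input $\bar u = \sum_{i,k}\gamma_k^i u_k^i$ and observe that $A x + B\bar u = \sum_{i,k}\gamma_k^i x_{k+1}^i \in \mathcal{CS}^{j-1}$, since it is a convex combination of stored states. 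Because the same multipliers reassigned to the shifted points $x_{k+1}^i$ are feasible (not necessarily optimal) for the program defining $V^{j-1}(Ax+B\bar u)$, and because $J_{k\rightarrow\infty}^i(x_k^i) = h(x_k^i,u_k^i) + J_{k+1\rightarrow\infty}^i(x_{k+1}^i)$ by~\eqref{eq:realizedCost}, I obtain $V^{j-1}(Ax+B\bar u) \leq V^{j-1}(x) - \sum_{i,k}\gamma_k^i h(x_k^i,u_k^i)$. Convexity of $h$ under Assumption~\ref{ass:costStrictCovexity} together with Jensen's inequality gives $h(x,\bar u) \leq \sum_{i,k}\gamma_k^i h(x_k^i,u_k^i)$, yielding the CLF inequality $V^{j-1}(Ax+B\bar u) + h(x,\bar u) \leq V^{j-1}(x)$.

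With this in hand I would prove the per-step decrease inequality (the second displayed claim). Let $[u^{j,*}_{t|t},\dots,u^{j,*}_{t+N-1|t}]$ and $[x^{j,*}_{t|t},\dots,x^{j,*}_{t+N|t}]$ be the optimal input and state sequences of~\eqref{eq:FTOCP} at time $t$, so that $x_{t+1}^j = x^{j,*}_{t+1|t}$ and $u_t^j = u^{j,*}_{t|t}$. At time $t+1$ I would use as candidate the shifted tail $[u^{j,*}_{t+1|t},\dots,u^{j,*}_{t+N-1|t},\bar u]$, where $\bar u$ is the CLF input at the terminal state $x^{j,*}_{t+N|t} \in \mathcal{CS}^{j-1}$. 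This candidate is feasible for~\eqref{eq:FTOCP} at $t+1$, and applying the CLF inequality to its last stage collapses the terminal pair $h(x^{j,*}_{t+N|t},\bar u) + V^{j-1}(Ax^{j,*}_{t+N|t}+B\bar u)$ into $V^{j-1}(x^{j,*}_{t+N|t})$. The candidate cost therefore equals $J_{t\rightarrow t+N}^{\scalebox{0.4}{LMPC},j}(x_t^j) - h(x_t^j,u_t^j)$, and since the optimal cost at $t+1$ is no larger than any feasible cost, the decrease inequality follows after rearranging.

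For the chain of inequalities I would combine this decrease with a single extremal bound at $t=0$. Using the $(j-1)$-th stored trajectory, the input prefix $[u_0^{j-1},\dots,u_{N-1}^{j-1}]$ drives the state to $x_N^{j-1}$, which lies in $\mathcal{CS}^{j-1}$ by construction~\eqref{eq:CS}, so it is feasible for~\eqref{eq:FTOCP} at $t=0$; bounding its terminal cost by $V^{j-1}(x_N^{j-1}) \leq J_{N\rightarrow\infty}^{j-1}(x_N^{j-1})$ (take a single multiplier equal to one) telescopes the running and tail costs into $J_{0\rightarrow\infty}^{j-1}(x_S)$, giving $J_{0\rightarrow N}^{\scalebox{0.4}{LMPC},j}(x_S) \leq J_{0\rightarrow\infty}^{j-1}(x_S)$. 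Telescoping the decrease inequality from $0$ to $t-1$ then yields the left inequality. For the right inequality I would telescope the decrease from time $t$ onward and let the horizon go to infinity: since $J_{s\rightarrow s+N}^{\scalebox{0.4}{LMPC},j}(x_s^j) \geq 0$ and, by the asymptotic stability in Proposition~\ref{prop:recFeasibility}, $x_s^j \to 0$, this shows $J_{t\rightarrow t+N}^{\scalebox{0.4}{LMPC},j}(x_t^j) \geq \sum_{k=t}^\infty h(x_k^j,u_k^j)$, which added to the first $t$ stage costs gives the stated lower bound $J_{0\rightarrow\infty}^{j}(x_S)$. Finally, letting $t\to\infty$ in the left inequality and again using nonnegativity of the predicted cost proves that the iteration cost is non-increasing in $j$.

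The main obstacle I anticipate is the careful handling of the infinite objects: the programs in~\eqref{eq:Qfun} involve infinite convex combinations, so I must justify that the multiplier reassignment to shifted indices is admissible and that Jensen's inequality together with the telescoping cost identity extends to these infinite sums. This is legitimate because Assumption~\ref{ass:stored} guarantees summable tail costs and decay of the stored trajectories to the origin, but it is the step requiring genuine care. The limiting arguments as $t\to\infty$ likewise rely on the convergence and finiteness granted by Assumptions~\ref{ass:costStrictCovexity}--\ref{ass:stored} and Proposition~\ref{prop:recFeasibility}, rather than on any new estimate.
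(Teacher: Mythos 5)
Your proposal is correct and takes essentially the same route as the source of this result: the paper does not reprove Proposition~\ref{prop2} but cites Theorem~2 of~\cite{LMPC_linear}, whose argument is precisely your three-step structure --- the control-Lyapunov decrease of $V^{j-1}$ on $\mathcal{CS}^{j-1}$ via shifted multipliers and Jensen's inequality, the shifted-tail candidate for the one-step cost decrease, and the $(j-1)$-th stored trajectory as the feasible candidate at $t=0$ followed by telescoping and a limit in $t$.
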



\begin{remark}
Notice that Assumption~\ref{ass:stored} and Propositions~\ref{prop:recFeasibility}--\ref{prop2} imply that at each iteration $j$ we have $J_{0 \rightarrow \infty}^j(x_s) = \sum_{t=0}^\infty h(x_t^j, u_t^j) \leq J_{0 \rightarrow \infty}^0(x_s) < \infty$.
\end{remark}

\section{Optimality and Performance Improvement}

First, we introduce a sufficient condition which guarantees optimality of the LMPC policy at convergence. Afterwards, we show that this sufficient condition also implies a strict performance improvement at each iteration.

\subsection{Optimality}

Assume that, after a finite number of iterations $c$, the closed-loop system~\eqref{eq:sys} and~\eqref{eq:policy} converges to a fixed-point, i.e,
\begin{equation*}
\begin{aligned}
    \boldsymbol{x}^c =\boldsymbol{x}^{c+1} = \dots =: \boldsymbol{x}^\infty \text{ and } \boldsymbol{u}^c =\boldsymbol{u}^{c+1} = \dots =: \boldsymbol{u}^\infty.
\end{aligned}
\end{equation*}
In this case, we will simply say that the LMPC has converged to a fixed-point\footnote{Notice that a ``point'' for our algorithm is a pair of state-input trajectories associated with one iteration of the task. Convergence to a fixed-point is guaranteed by the monotonicity property in Proposition~\ref{prop2}.} $(\boldsymbol{x}^\infty,\boldsymbol{u}^\infty)$.

Now, consider the following finite-time optimal control problem closely related to problem~\eqref{eq:inf_OCP}:
\begin{equation}\label{eq:FiniteHorizon}
	\begin{aligned}
	\!P_{t\rightarrow t+T}^*( x_t^\infty, x_{t+T}^{\infty}) =\! \min_{u_{0},\ldots,u_{T-1}} ~ &\sum_{k=0}^{T-1}  h(x_{k},u_{k}) \\
	\text{subject to} \quad ~ & x_{k+1}= Ax_{k}+Bu_{k}, \\
   & x_{k} \in \mathcal{X}, ~ u_{k} \in \mathcal{U},~ \\
	&x_{0}=x_t^\infty, ~x_{T} = x_{t+T}^{\infty},\\
	& \forall k \in \{0, \ldots, T-1\},
	\end{aligned}
\end{equation}
where the running cost, the dynamic constraint, the state and input constraints are the same as in~\eqref{eq:FTOCP}. Compare problem~\eqref{eq:FiniteHorizon} with problem~\eqref{eq:FTOCP}. Problem~\eqref{eq:FiniteHorizon} uses a horizon $T$, possibly longer than the horizon $N$ of problem~\eqref{eq:FTOCP}. Moreover, the terminal set in problem~\eqref{eq:FiniteHorizon} is a subset of the terminal set in problem~\eqref{eq:FTOCP} if $j\geq c$. Therefore, if the optimal solution to problem~\eqref{eq:FTOCP}  with $T=N$ is feasible for problem~\eqref{eq:FiniteHorizon}, then it is also optimal. 

In what follows, we introduce a sufficient condition which guarantees that $\boldsymbol{x}^\infty_{t:t+T} = [x_t^\infty,\ldots, x_{t+T}^\infty]$ is optimal for problem~\eqref{eq:FiniteHorizon} for all $T\geq0$ and for all $t\geq0$. Compared to the sufficient condition presented in~\cite[Theorem~3]{LMPC}, our condition can be easily checked and it may be used to verify that the closed-loop trajectory $\boldsymbol{x}^\infty$ associated with a fixed-point $(\boldsymbol{x}^\infty, \boldsymbol{u}^\infty)$ of the LMPC algorithm is optimal for problem~\eqref{eq:inf_OCP}, as shown in Section~\ref{sec:results}.

First, we rewrite problem~\eqref{eq:FiniteHorizon} in compact form:
\begin{equation}\label{eq:FiniteHorizonCompact}
	\begin{aligned}
	P_{t\rightarrow t+T}^*( x_t^\infty, x_{t+T}^{\infty})=\min_{\boldsymbol{z}_{t:t+T-1}} \quad & \boldsymbol{z}_{t:t+T-1}^\top Q_{T-1} \boldsymbol{z}_{t:t+T-1} \\
	\text{subject to} ~\quad & \GeqT \boldsymbol{z}_{t:t+T-1} = b_{\textrm{eq}} \\
   & \FinT \boldsymbol{z}_{t:t+T-1} \leq b_\mathrm{in},
	\end{aligned},
\end{equation}
where the vector  $\boldsymbol{z}_{t:t+T-1}=\text{Vec}(x_{0}, u_{0},\ldots, x_{T-1}, u_{T-1})$.
The matrices $Q_{T-1}$, $\GeqT$, $\FinT$ and the vectors $b_\textrm{eq}$ and $b_\mathrm{in}$ and  are defined in the Appendix~\ref{sec:Appendix-Mat}.

Let
\begin{equation*}
    \boldsymbol{z}_{t:t+T-1}^*= \text{Vec}(x_{t}^*, u_{t}^*, \ldots, x_{t+T-1}^*, {u_{t+T-1}^*})
\end{equation*} be the optimal solution to problem~\eqref{eq:FiniteHorizonCompact}. We recall that there exists a sequence of multipliers $\boldsymbol{\lambda}^*_{t:t+T-1}$ and $\boldsymbol{\delta}_{t:t+T-1}^* = [\boldsymbol{\delta}_{t:t+T-1}^{*,\textrm{active}}, \boldsymbol{\delta}_{t:t+T-1}^{*,\textrm{inactive}}]$ so that the following KKT conditions are satisfied:
\begin{equation*}
\begin{aligned}
    \!&\!\begin{bmatrix*}[l]
    \GeqT \\
    \FinTAc \\
    \end{bmatrix*}^\top \!\! \begin{bmatrix} \boldsymbol{\lambda}^*_{t:t+T-1}\\ \boldsymbol{\delta}_{t:t+T-1}^{*,\textrm{active}}\\
    \end{bmatrix}\!\! =\! - 2 Q_{T-1} \boldsymbol{z}_{t:t+T-1}^*, &&\quad (\textrm{Stationarity})\\[6pt]
    \!&\begin{aligned}\GeqT \, \boldsymbol{z}_{t:t+T-1}^* = b_{\textrm{eq}}, \\
    \FinT \, \boldsymbol{z}_{t:t+T-1}^* \leq b_\mathrm{in},\end{aligned} &&\!\!\!\!\!\!\!\!\! (\textrm{Primal Feasibility})\\[6pt]
    &\boldsymbol{\delta}^*_{t:t+T-1} \geq 0, &&\!\!\!\!\! (\textrm{Dual Feasibility}) \\[6pt]
    &\begin{aligned}
        \delta_{k|t,i}^* \, F_{k|t,i}^\textrm{in} = 0, \,&\forall i \in \mathcal{A}_{k|t},  \, \forall k \in \mathcal{T}_{t}, \\ \delta_{k|t,i}^* \, F_{k|t,i}^\textrm{in} = 0, \, &\forall i \in \mathcal{I}_{k|t}, \; \, \forall k\in \mathcal{T}_{t},
    \end{aligned} &&\!\!\!\!\!\!\!\! (\textrm{Complementarity})    \end{aligned}
\end{equation*}
where $\mathcal{T}_t=\{t,\ldots,t+T-1\}$ and, at optimum, $\FinTAc$ collects the active inequality constraints of problem~\eqref{eq:FiniteHorizonCompact}, and $\mathcal{A}_{k|t}$ and $\mathcal{I}_{k|t}$ are the set of indices associated with active and inactive inequality constraints at time $k\in \{t,\ldots, t+T-1\}$.
\begin{assumption}[LICQ]\label{ass:LICQ}
For a given fixed prediction horizon $N$ and for all $t\geq1$, the LICQ condition holds for problem~\eqref{eq:FiniteHorizonCompact} defined for $T=N-1$. 
\end{assumption}

The above assumption is the key ingredient that will allow us to show strict performance improvement. More explicitly, it states that at any time $t\geq 1$, given the following optimal solution to~problem~\eqref{eq:FiniteHorizonCompact} with horizon $T=N-1$: 
\begin{equation}
    \begin{aligned}
        &\boldsymbol{z}^*_{t:t+N-2} =\text{Vec}(x_t^*,u_t^*, \ldots, u_{t+N-2}^*,x_{t+N-2}^*),
        \end{aligned}
\end{equation}
the gradients of the active inequality constraints and those of the equality constraints are linearly independent at $\boldsymbol{z}^*_{t:t+N-2}$, i.e., the following matrix is full row rank
\begin{equation}\label{eq:Mdef}
    \MatTm = \begin{bmatrix*}[l]
    \GeqNm \\
    \FinNAcm \\
    \end{bmatrix*},
\end{equation}
where $\FinNAcm$ collects the active inequality constraints.

\begin{remark}
Note that the set containment condition used to establish~\cite[Theorem~3]{LMPC} only holds when no input constraints are active, a rather strong restriction. Moreover, the same condition only holds when the associated one-step forward and one-step backward reachable sets are full-dimensional, which is also not required for Assumption~\ref{ass:LICQ} to hold, as shown in our numerical example in Section~\ref{sec:results}.
\end{remark}

When the LICQ condition from Assumption~\ref{ass:LICQ} is satisfied, the sequences
\begin{equation*}
    \boldsymbol{x}^\infty_{t:t+T}\! = \![x_t^\infty, \ldots, x_{t+T}^\infty] \text{ and  } \boldsymbol{u}^\infty_{t:t+T-1}\! =\! [u_t^\infty, \ldots, u_{t+T-1}^\infty] 
\end{equation*}
are optimal for problem~\eqref{eq:FiniteHorizon}, for all $t\geq0$ and all $T>0$, as stated in the following theorem.


\begin{theorem}\label{th:newCond}
Consider system~\eqref{eq:sys} in closed-loop with the LMPC policy~\eqref{eq:policy} defined for a horizon $N$. Let Assumptions~\ref{ass:costStrictCovexity}--\ref{ass:LICQ} hold and assume that after $c$ iterations the closed-loop system~\eqref{eq:sys} and~\eqref{eq:policy} converges to a fixed-point $({\boldsymbol{x}}^\infty,{\boldsymbol{u}}^\infty)$. Then $({\boldsymbol{x}}_{t:t+T}^\infty, {\boldsymbol{u}}_{t:t+T-1}^\infty)$ is the optimizer of the finite-horizon optimal control problem~\eqref{eq:FiniteHorizon} for all $t\geq0$ and for all $T>0$.
\end{theorem}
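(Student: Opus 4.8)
The plan is to exploit the fact that problem~\eqref{eq:FiniteHorizonCompact} is a convex quadratic program with affine constraints, for which the KKT conditions are \emph{sufficient} for global optimality. Hence it suffices to exhibit, for every $t\geq0$ and every $T>0$, a multiplier sequence certifying optimality of the fixed-point segment $\boldsymbol{z}^\infty_{t:t+T-1}=\text{Vec}(x_t^\infty,u_t^\infty,\ldots,x_{t+T-1}^\infty,u_{t+T-1}^\infty)$. Since $R\succ0$, the cost is strictly convex in the input sequence and, with the endpoints $x_t^\infty$ and $x_{t+T}^\infty$ fixed, the state trajectory is affine in the inputs; the minimizer is therefore unique, so establishing optimality of the fixed-point segment proves it is \emph{the} optimizer.

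First I would show that the fixed-point continuation $[x_t^\infty,\ldots,x_{t+N}^\infty]$ with inputs $[u_t^\infty,\ldots,u_{t+N-1}^\infty]$ is optimal for the LMPC problem~\eqref{eq:FTOCP} at every $t$. Applying Proposition~\ref{prop2} at the converged iteration and using $J_{0\rightarrow\infty}^{c}(x_S)=J_{0\rightarrow\infty}^{\infty}(x_S)$ forces both of its inequalities to be equalities, yielding $J_{t\rightarrow t+N}^{\scalebox{0.4}{LMPC}}(x_t^\infty)=\sum_{k=t}^{\infty}h(x_k^\infty,u_k^\infty)$. The fixed-point continuation is feasible for~\eqref{eq:FTOCP} (its terminal state is the stored point $x_{t+N}^\infty\in\mathcal{CS}^{c}$) and, using $V^{c}(x_{t+N}^\infty)\leq J_{t+N\rightarrow\infty}^{\infty}(x_{t+N}^\infty)$, its cost is at most this optimal value; hence it is optimal and $V^{c}(x_{t+N}^\infty)=J_{t+N\rightarrow\infty}^\infty(x_{t+N}^\infty)$. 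Because the terminal cost is then a fixed constant shared by every trajectory ending at $x_{t+N}^\infty$, any cheaper $N$-step path from $x_t^\infty$ to $x_{t+N}^\infty$ would strictly lower the value of~\eqref{eq:FTOCP}, a contradiction; thus the fixed-point segment solves~\eqref{eq:FiniteHorizon} for $T=N$. The principle of optimality (a cheaper sub-path could be spliced in) then extends optimality to every $T\leq N$ and every starting time, in particular to the horizon $T=N-1$ appearing in Assumption~\ref{ass:LICQ}.

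Next I would glue the per-window certificates into a single, time-consistent multiplier sequence. Since the fixed-point segment optimally solves~\eqref{eq:FiniteHorizon} with $T=N$ on each window $[t,t+N]$ and the constraints are affine, the KKT system stated before Assumption~\ref{ass:LICQ} supplies equality multipliers (costates) and nonnegative inequality multipliers on that window. Restricting the window-$t$ multipliers to the overlap $[t+1,t+N]$ --- reinterpreting the costates at $t+1$ and $t+N$ as the endpoint multipliers of the shorter problem --- yields a valid KKT tuple for~\eqref{eq:FiniteHorizonCompact} with $T=N-1$ started at $t+1$; the same holds for the window-$(t+1)$ multipliers. Assumption~\ref{ass:LICQ} makes $\MatTm$ full row rank, so these multipliers are \emph{unique}, forcing the two restrictions to coincide on the overlap. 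Pairwise agreement across all consecutive windows defines a global costate and inequality-multiplier sequence over $[0,\infty)$ whose restriction to any interval $[t,t+T]$ satisfies stationarity in the interior, carries the endpoint multipliers at $t$ and $t+T$, and inherits dual feasibility and complementarity --- a complete KKT certificate. By sufficiency of KKT for the convex program~\eqref{eq:FiniteHorizonCompact}, the fixed-point segment is optimal for every $t\geq0$ and every $T>0$.

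I expect the main obstacle to be this gluing step: verifying rigorously that the restriction of a window's multipliers is an admissible KKT tuple for the length-$(N-1)$ overlap (so that the LICQ uniqueness can be invoked), and that agreement on overlaps produces valid stationarity at the seams of arbitrarily long horizons. This hinges on writing the costate recursion explicitly and checking that the boundary costates of adjacent windows match; the length-$(N-1)$ overlap is precisely the horizon for which Assumption~\ref{ass:LICQ} guarantees the needed uniqueness.
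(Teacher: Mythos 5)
Your proposal is correct and follows essentially the same route as the paper: first establish optimality of each $N$-step window via the fixed-point/cost-equality argument with the $V$-function lower bound, then use the LICQ-induced uniqueness of multipliers on the length-$(N-1)$ overlap to glue adjacent KKT certificates into one for longer horizons, finishing with KKT sufficiency for the convex QP and dynamic programming for $T<N$. The only difference is presentational --- the paper extends from $N$ to $N+1$ and iterates by induction, whereas you chain all overlaps at once into a single global multiplier sequence and restrict.
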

\begin{proof}
The proof can be found in the Appendix~\ref{sec:mainProof}.
\end{proof}


\begin{remark}
Theorem~\ref{th:newCond} shows that the state-input vector $\boldsymbol{z}^\infty_{t:t+N-1} = \text{Vec}(x_t^\infty, u_t^\infty, \ldots, x_{t+N-1}^\infty, u_{t+N-1}^\infty)$ is optimal for problem~\eqref{eq:FiniteHorizonCompact} with $T=N$. Therefore, given a fixed-point $(\boldsymbol{x}^\infty,\boldsymbol{u}^\infty)$, Assumption~\ref{ass:LICQ} can be easily verified by checking the rank of the matrix $\MatTm$ associated with $\boldsymbol{z}^\infty_{t:t+N-1}$ for all times $t\geq1$. 
\end{remark}

Theorem~\ref{th:newCond} implies that $(\boldsymbol{x}^\infty_{0:T}, \boldsymbol{u}^\infty_{0:T-1})$ is optimal for problem~\eqref{eq:FiniteHorizon} for all $T\geq0$.
Next, we show that $(\boldsymbol{x}^\infty, \boldsymbol{u}^\infty)$ is optimal for problem~\eqref{eq:inf_OCP}.

\begin{theorem}
\label{th:optInf}
Consider system~\eqref{eq:sys} in closed-loop with the LMPC policy~\eqref{eq:policy} defined for a horizon $N$. Let Assumptions~\ref{ass:costStrictCovexity}-\ref{ass:LICQ} hold and assume that after $c$ iterations the closed-loop system~\eqref{eq:sys} and~\eqref{eq:policy} converges to a fixed-point $({\boldsymbol{x}}^\infty, {\boldsymbol{u}}^\infty)$.
Then, $({\boldsymbol{x}}^\infty, {\boldsymbol{u}}^\infty)$ is the optimizer of the infinite-horizon optimal control problem~\eqref{eq:inf_OCP}.
\end{theorem}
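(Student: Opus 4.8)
The plan is to establish the two inequalities $J_{0\rightarrow\infty}^*(x_S)\le J_{0\rightarrow\infty}^{\infty}(x_S)$ and $J_{0\rightarrow\infty}^{\infty}(x_S)\le J_{0\rightarrow\infty}^*(x_S)$, where $J_{0\rightarrow\infty}^{\infty}(x_S):=\sum_{t=0}^{\infty}h(x_t^\infty,u_t^\infty)$ denotes the closed-loop cost at the fixed point. The first inequality is immediate: by Proposition~\ref{prop:recFeasibility} the fixed-point trajectory satisfies the dynamics and the state/input constraints, it starts at $x_0^\infty=x_S$, and by Assumption~\ref{ass:stored} together with Proposition~\ref{prop2} it has finite cost with $x_t^\infty\to 0$; hence it is feasible for problem~\eqref{eq:inf_OCP} and its cost upper-bounds the optimum. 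The entire difficulty lies in the reverse inequality, for which I would exploit Theorem~\ref{th:newCond}.

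First I would turn the finite-horizon optimality of Theorem~\ref{th:newCond} into a statement about the limit $T\to\infty$. Applying the theorem at $t=0$ gives, for every $T>0$, the identity $\sum_{k=0}^{T-1}h(x_k^\infty,u_k^\infty)=P_{0\rightarrow T}^*(x_S,x_T^\infty)$, i.e. the first $T$ stages of the fixed point are the cheapest way of driving $x_S$ to $x_T^\infty$ in $T$ steps. Since the total cost is finite, the tail $\sum_{k=T}^\infty h(x_k^\infty,u_k^\infty)$ vanishes, so that $J_{0\rightarrow\infty}^{\infty}(x_S)=\lim_{T\to\infty}P_{0\rightarrow T}^*(x_S,x_T^\infty)$, while at the same time $x_T^\infty\to 0$.

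Next I would upper-bound this limit by $J_{0\rightarrow\infty}^*(x_S)$. Fix $\varepsilon>0$ and pick a feasible trajectory $(\boldsymbol{x},\boldsymbol{u})$ for~\eqref{eq:inf_OCP} of cost at most $J_{0\rightarrow\infty}^*(x_S)+\varepsilon$. For large $T$ I would construct a competitor for the terminal-constrained problem~\eqref{eq:FiniteHorizon} by following $(\boldsymbol{x},\boldsymbol{u})$ for the first $T-m$ steps and then inserting a short $m$-step maneuver that steers the state exactly onto $x_T^\infty$. Because both the chosen trajectory and the fixed point approach the origin, the two endpoints of this maneuver are arbitrarily close to the origin for $T$ large; since the origin lies in the interior of $\mathcal{X}$ and $\mathcal{U}$, such a maneuver stays feasible and its stage cost is quadratic in the endpoint distance and therefore vanishes. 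Feasibility of this competitor then yields $P_{0\rightarrow T}^*(x_S,x_T^\infty)\le \sum_{k=0}^{T-m-1}h(x_k,u_k)+o(1)\le J_{0\rightarrow\infty}^*(x_S)+\varepsilon+o(1)$. Letting $T\to\infty$ and then $\varepsilon\to 0$ gives $J_{0\rightarrow\infty}^{\infty}(x_S)=\lim_T P_{0\rightarrow T}^*(x_S,x_T^\infty)\le J_{0\rightarrow\infty}^*(x_S)$, closing the squeeze.

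The main obstacle is precisely this terminal-state matching step. It requires (i) that a near-optimal infinite-horizon trajectory converge to the origin, which is delicate because Assumption~\ref{ass:costStrictCovexity} only gives $Q\succeq 0$, so finite cost forces $u_t\to 0$ through $R\succ 0$ but not directly $x_t\to 0$; I would recover convergence either from a detectability/stabilizability property implicit in the existence of the convergent trajectory of Assumption~\ref{ass:stored}, or by restricting the comparison to trajectories converging to the origin and arguing the infimum in~\eqref{eq:inf_OCP} is unchanged by this restriction. It also requires (ii) a local-controllability estimate near the origin guaranteeing that the bridging maneuver is feasible with cost vanishing as its endpoints tend to the origin. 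The remaining pieces, namely the vanishing of the tail and the interchange of the $T$-limit with the $\varepsilon$-argument, are routine given finiteness of $J_{0\rightarrow\infty}^{\infty}(x_S)$ and the monotonicity in Proposition~\ref{prop2}.
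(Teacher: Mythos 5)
Your overall skeleton matches the paper's: both arguments reduce the theorem to showing $\lim_{T\to\infty} P^*_{0\rightarrow T}(x_S, x_T^\infty) = J^*_{0\rightarrow\infty}(x_S)$ and then invoke Theorem~\ref{th:newCond} to identify $(\boldsymbol{x}^\infty,\boldsymbol{u}^\infty)$ as the minimizer in the limit. Where you genuinely diverge is in the step that handles the terminal-state mismatch between the fixed point and an optimal infinite-horizon trajectory. The paper splits $P^*_{0\rightarrow T}(x_S, x_T^\infty)$ into $P^*_{0\rightarrow T}(x_S, x_T^*)$ plus a difference term, and bounds that difference by $L\,\|x_T^*-x_T^\infty\|$ using Lipschitz continuity of the parametric-QP value function in the terminal-state parameter; since both terminal states tend to the origin, the discrepancy vanishes. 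You instead construct an explicit feasible competitor by grafting a short bridging maneuver onto a near-optimal trajectory. Your route is more constructive and sidesteps the value-function regularity claim (including the implicit assertion that a single Lipschitz constant works uniformly in $T$, which is the least transparent point of the paper's argument); the paper's route avoids any explicit steering construction.

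That bridging step, however, is where your proposal has a genuine gap rather than a mere loose end. Exactly hitting $x_T^\infty$ in $m$ steps from a nearby state requires a reachability property that is never assumed: $(A,B)$ is not assumed controllable, and Assumption~\ref{ass:costStrictCovexity} with $Q\succeq 0$ does not imply it. To repair this you would need to observe that the uncontrollable component of the state evolves identically along every feasible trajectory from $x_S$ (so the terminal constraint is automatically satisfied on that subspace), perform the matching only on the controllable subspace, and prove a quantitative local-steering lemma showing that the maneuver's state and input excursions --- hence its cost and its margin to the constraints --- shrink with the endpoint distance, using that the origin is interior to $\mathcal{X}$ and $\mathcal{U}$. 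None of this is in your sketch, and without it the inequality $P^*_{0\rightarrow T}(x_S,x_T^\infty)\le J^*_{0\rightarrow\infty}(x_S)+\varepsilon+o(1)$ does not follow. Your obstacle (i), that a near-optimal trajectory for~\eqref{eq:inf_OCP} need not converge to the origin, is real but is shared by the paper, which attributes $x_T^*\to 0$ to Proposition~\ref{prop:recFeasibility} even though that proposition concerns the LMPC closed loop; I would not count that point against you specifically.
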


\begin{proof}
First, we notice that 
\begin{equation}\label{eq:sequenceDifference}
\begin{aligned}
        &\lim_{T \rightarrow \infty} || J^*_{0\rightarrow \infty}(x_S) - P_{0\rightarrow T}^*( x_S, x_{t+T}^{\infty})||\\
        &\qquad =\lim_{T \rightarrow \infty} || J^*_{0\rightarrow \infty}(x_S) - P_{0\rightarrow T}^*( x_S, x_{t+T}^{*}) \\
        &\qquad\quad + P_{0\rightarrow T}^*( x_S, x_{t+T}^{*}) - P_{0\rightarrow T}^*( x_S, x_{t+T}^{\infty})||\\
        &\qquad \leq \lim_{T \rightarrow \infty} || J^*_{0\rightarrow \infty}(x_S) - P_{0\rightarrow T}^*( x_S, x_{t+T}^{*}) || \\
        &\qquad\quad + \lim_{T \rightarrow \infty} || P_{0\rightarrow T}^*( x_S, x_{t+T}^{*}) - P_{0\rightarrow T}^*( x_S, x_{t+T}^{\infty})||.
\end{aligned}
\end{equation}
By definition we have that $ J^*_{0\rightarrow \infty}(x_S) =\lim_{T \rightarrow \infty} P_{0\rightarrow T}^*( x_S, x_{t+T}^{*})$, zeroing the first term of the sum. Furthermore, we notice that problem~\eqref{eq:FiniteHorizon} is a parametric QP and therefore its value function is Lipschitz continuous within a compact set for some constant $L$. These facts allow us to rewrite~\eqref{eq:sequenceDifference} as
\begin{equation*}
\begin{aligned}
        &\lim_{T \rightarrow \infty} || J^*_{0\rightarrow \infty}(x_S) - P_{0\rightarrow T}^*( x_S, x_{t+T}^{\infty})||\\
        &\qquad\leq \lim_{T \rightarrow \infty} || P_{0\rightarrow T}^*( x_S, x_{t+T}^{*}) - P_{0\rightarrow T}^*( x_S, x_{t+T}^{\infty})||\\
        &\qquad\leq \lim_{T \rightarrow \infty} L || x_{t+T}^{*} - x_{t+T}^{\infty}||.
\end{aligned}
\end{equation*}
By asymptotic stability~(Proposition~\ref{prop:recFeasibility}), sequences $\{x_{t}^\infty\}_{t=0}^\infty$ and $\{u_{t}^\infty\}_{t=0}^\infty$ converge to zero, hence for any $\epsilon>0$, there exists $N$ and $M$ such that  $x_{t+T}^\infty\in\mathcal{B}(\frac{\epsilon}{2})~\forall T \geq N$ and  $x_{t+T}^*\in\mathcal{B}(\frac{\epsilon}{2})~\forall T\geq M$. Hence, for all $T\geq \max\{N, M\}$, 
\begin{align*}
    || x_{t+T}^{*} - x_{t+T}^{\infty}||\leq|| x_{t+T}^{*}-0||+|| x_{t+T}^{\infty}-0||\leq\epsilon\;.
\end{align*}
Notice that, as $\epsilon$ is arbitrarily small,
\begin{align*}
    \begin{aligned}
        &\lim_{T \rightarrow \infty} || J^*_{0\rightarrow \infty}(x_S) - P_{0\rightarrow T}^*( x_S, x_{t+T}^{\infty})||\\
        &\qquad\leq \lim_{T \rightarrow \infty} L || x_{t+T}^{*} - x_{t+T}^{\infty}||=0\;.
\end{aligned}
\end{align*}
From the above equation we have that 
\begin{equation*}
    J^*_{0\rightarrow \infty}(x_S) = \lim_{T \rightarrow \infty} P_{0\rightarrow T}^*( x_S, x_{t+T}^{\infty}).
\end{equation*}
Finally, from Theorem~\ref{th:newCond} we have that $({\boldsymbol{x}}_{0:T}^\infty, {\boldsymbol{u}}_{0:T-1}^\infty)$ is the optimizer to problem $P_{0\rightarrow T}^*( x_S, x_{t+T}^{\infty})$ for all $T \geq 0$, which together with the above equation implies that $ ({\boldsymbol{x}}^\infty, {\boldsymbol{u}}^\infty) = \lim_{T\rightarrow\infty}({\boldsymbol{x}}_{0:T}^\infty, {\boldsymbol{u}}_{0:T-1}^\infty)$ is the optimizer to the infinite-horizon optimal control problem~\eqref{eq:inf_OCP}.
\end{proof}

\subsection{Performance Improvement}
In this section, we show that the closed-loop performance $J_{0 \rightarrow \infty}^j(x_S)$ associated with the LMPC policy is strictly decreasing\footnote{This result is stronger than the one from Proposition~\ref{prop2}, where we showed that the iteration cost is non-increasing at each iteration.} at each iteration until the closed-loop trajectory converges to the optimal one from problem~\eqref{eq:inf_OCP}.

\begin{lemma}\label{lemma1}
Consider system~\eqref{eq:sys} in closed-loop with the LMPC policy~\eqref{eq:policy}. Let Assumptions~\ref{ass:costStrictCovexity} and \ref{ass:stored} hold. If two iterations attain the same cost, then the associated trajectories are also the same, i.e., if $J^{j-1}_{0\rightarrow \infty}(x_S) = J^j_{0\rightarrow \infty}(x_S)$, then ${\boldsymbol{x}}^{j-1} = {\boldsymbol{x}}^{j}$.
\end{lemma}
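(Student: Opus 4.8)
The plan is to combine the monotonicity chain of Proposition~\ref{prop2} with the strict convexity of the predicted cost in~\eqref{eq:FTOCP}. First I would note that the hypothesis $J^{j-1}_{0\rightarrow\infty}(x_S)=J^{j}_{0\rightarrow\infty}(x_S)=:J^\star$ pins the middle term of Proposition~\ref{prop2} to this common value, so that for every $t\ge 0$
\begin{equation*}
\sum_{k=0}^{t-1}h(x_k^j,u_k^j)+J_{t\rightarrow t+N}^{\scalebox{0.4}{LMPC},j}(x_t^j)=J^\star .
\end{equation*}
Second, I would record that problem~\eqref{eq:FTOCP} admits a \emph{unique} minimizer: viewing the objective as a function of the free input sequence, the quadratic stage cost contributes a block-diagonal Hessian $\mathrm{diag}(R,\dots,R)\succ 0$ by Assumption~\ref{ass:costStrictCovexity}, while the $Q$-terms and the terminal cost $V^{j-1}$ of~\eqref{eq:Qfun} (the value of a parametric LP, hence convex in its argument) add only convex terms; strict convexity in the inputs then forces uniqueness of both the optimal inputs and the induced state trajectory.

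The heart of the argument is an induction on $t$ proving $x_k^j=x_k^{j-1}$ and $u_k^j=u_k^{j-1}$ for all $k$. The base case is immediate since both trajectories start at $x_S$. For the step, suppose the trajectories agree up to time $t$. Using the collapsed identity above together with the telescoping decomposition $J^\star=J^{j-1}_{0\rightarrow\infty}(x_S)=\sum_{k=0}^{t-1}h(x_k^{j-1},u_k^{j-1})+J_{t\rightarrow\infty}^{j-1}(x_t^{j-1})$ and the induction hypothesis, I would cancel the common prefix to obtain
\begin{equation*}
J_{t\rightarrow t+N}^{\scalebox{0.4}{LMPC},j}(x_t^j)=J_{t\rightarrow\infty}^{j-1}(x_t^{j-1}).
\end{equation*}
I would then supply the length-$N$ head of the stored tail, namely the inputs $u_t^{j-1},\dots,u_{t+N-1}^{j-1}$ with states $x_t^{j-1},\dots,x_{t+N}^{j-1}$, as a candidate for~\eqref{eq:FTOCP}: it is feasible since $x_t^{j-1}=x_t^j$ (induction hypothesis), the trajectory of iteration $j-1$ satisfies the constraints (Proposition~\ref{prop:recFeasibility}, or Assumption~\ref{ass:stored} at the zeroth iteration), and the stored point $x_{t+N}^{j-1}\in\mathcal{CS}^{j-1}$. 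Its cost equals $\sum_{k=t}^{t+N-1}h(x_k^{j-1},u_k^{j-1})+V^{j-1}(x_{t+N}^{j-1})\le J_{t\rightarrow\infty}^{j-1}(x_t^{j-1})$, where the inequality bounds $V^{j-1}(x_{t+N}^{j-1})$ by the realized cost-to-go~\eqref{eq:realizedCost} of that single stored sample. Combining with the displayed equality shows the candidate attains the optimal value, so by uniqueness it \emph{is} the optimizer; reading the applied input $u_t^{j}=u_{t|t}^{j,*}=u_t^{j-1}$ from~\eqref{eq:policy} gives $x_{t+1}^j=Ax_t^{j-1}+Bu_t^{j-1}=x_{t+1}^{j-1}$, which closes the induction and yields $\boldsymbol{x}^{j-1}=\boldsymbol{x}^{j}$.

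The step I expect to be the main obstacle is upgrading equality of costs to equality of optimizers, that is, forcing the candidate assembled from the previous iteration's tail to coincide with the FTOCP solution rather than merely matching its value. This is precisely where uniqueness of the minimizer, and hence the definiteness $R\succ 0$ in Assumption~\ref{ass:costStrictCovexity}, is indispensable, since with $Q\succeq 0$ alone two distinct trajectories could in principle share the same cost. A secondary point requiring care is the inequality $V^{j-1}(x_{t+N}^{j-1})\le J_{t+N\rightarrow\infty}^{j-1}(x_{t+N}^{j-1})$, which I would justify directly from the definition~\eqref{eq:Qfun} by concentrating the convex multipliers on the single stored state $x_{t+N}^{j-1}$; only with this bound does the head-of-tail candidate certify exactly the value $J_{t\rightarrow\infty}^{j-1}(x_t^{j-1})$ needed to drive the induction.
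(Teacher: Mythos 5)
Your proposal is correct and follows essentially the same route as the paper's proof: the same induction on $t$, the same use of the Proposition~\ref{prop2} chain to pin the LMPC value at time $t$, the same candidate built from the tail of iteration $j-1$ bounded via $V^{j-1}(x_{t+N}^{j-1})\le J^{j-1}_{t+N\rightarrow\infty}(x_{t+N}^{j-1})$, and the same appeal to strict convexity ($R\succ 0$ plus convexity of $V^{j-1}$) to turn equality of values into equality of optimizers. Your write-up is in fact slightly more explicit than the paper's about why the FTOCP minimizer is unique, which is where the paper is tersest.
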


\begin{proof}
We proceed by induction. First, assume that at the $j$-th iteration there exists a time $t$ such that $x_k^{j-1}=x_k^j$ for all $k \in \{0,\ldots, t\}$. From Proposition~\ref{prop2} and the convergence of the cost we have that
\begin{equation*}
    J_{0\rightarrow \infty}^{j-1}(x_S) = \sum_{k=0}^{t-1}h(x_k^j,u_k^j)+ J_{t\rightarrow t+N}^{\scalebox{0.4}{LMPC},j}(x_t) = J_{0\rightarrow \infty}^{j}(x_S).
\end{equation*}
Therefore, the optimal cost at time $t$ is
\begin{equation}\label{eq:costEq}
    J_{t\rightarrow t+N}^{\scalebox{0.4}{LMPC},j}(x_t) = J_{0\rightarrow \infty}^{j-1}(x_S)-\sum_{k=0}^{t-1}h(x_k^j,u_k^j).
\end{equation}
By our induction assumption, at time $t$ of the $j$-th iteration the state $x_t^j=x_t^{j-1}$. Now notice that at time $t$ of iteration $j$ the LMPC cost associated with the feasible trajectory $[x_{t}^{j-1},\ldots, x_{t+N}^{j-1}]$ is
\begin{equation}\label{eq:ineq}
    \begin{aligned}
&\sum_{k=t}^{t+N-1}h(x_k^{j-1}, u_k^{j-1}) + V^{j-1}(x_{t+N}^{j-1})\\
&\qquad\leq \sum_{k=t}^{t+N-1}h(x_k^{j-1}, u_k^{j-1}) + \sum_{k=t+N}^{\infty}h(x_k^{j-1}, u_k^{j-1})\\
&\qquad=J_{0\rightarrow \infty}^{j-1}(x_S)-\sum_{k=0}^{t-1}h(x_k^{j-1},u_k^{j-1})
\\
&\qquad=J_{0\rightarrow \infty}^{j-1}(x_S)-\sum_{k=0}^{t-1}h(x_k^{j},u_k^{j}).
\end{aligned}\end{equation}
Equations~\eqref{eq:costEq}--\eqref{eq:ineq} together with Assumption~\ref{ass:costStrictCovexity} and convexity of $V^j(\cdot)$ imply that $[x_{t}^{j-1},\ldots, x_{t+N}^{j-1}]$ is optimal at time $t$ and therefore $x_{t+1}^j= x_{t+1|t}^{j,*} = x_{t+1}^{j-1} $. Finally, notice that at time $t= 0$ we have that the initial state $x_0^{j-1} = x_0^j = x_S$, therefore we conclude by induction that $x_t^{j-1} = x_t^j$ for all $t \geq 0$.
\end{proof}

\begin{corollary}\label{th3}
Consider system~\eqref{eq:sys} in closed-loop with the LMPC policy~\eqref{eq:policy}. If Assumptions~\ref{ass:costStrictCovexity}--\ref{ass:LICQ} hold, then at iteration $j$ either one of the following must hold:
\begin{itemize}
    \item The closed-loop cost strictly decreases, i.e., $J_{0 \rightarrow \infty}^j(x_S)<J_{0 \rightarrow \infty}^{j-1}(x_S)$;
    \item The closed-loop cost is optimal for problem~\eqref{eq:inf_OCP}, i.e. $J_{0 \rightarrow \infty}^j(x_S)= J_{0 \rightarrow \infty}^{j-1}(x_S)=J^*_{0 \rightarrow \infty} (x_S)$.
\end{itemize}
\end{corollary}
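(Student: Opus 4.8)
The plan is to turn the monotonicity of the closed-loop cost into a clean dichotomy and then invoke Theorem~\ref{th:optInf} in the degenerate branch. By Proposition~\ref{prop2} the iteration cost never increases, so $J^j_{0\rightarrow \infty}(x_S) \leq J^{j-1}_{0\rightarrow \infty}(x_S)$. Exactly one of two situations can occur: either the inequality is strict, which is precisely the first alternative of the statement, or equality holds, $J^j_{0\rightarrow \infty}(x_S) = J^{j-1}_{0\rightarrow \infty}(x_S)$, which is the case I would analyse in detail.

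In the equality case, Lemma~\ref{lemma1} immediately yields $\boldsymbol{x}^{j-1} = \boldsymbol{x}^{j}$. I would then upgrade this to equality of the entire data record, $(\boldsymbol{x}^{j},\boldsymbol{u}^{j}) = (\boldsymbol{x}^{j-1},\boldsymbol{u}^{j-1})$. The inputs coincide because, following the reasoning in the proof of Lemma~\ref{lemma1}, the iteration-$(j\!-\!1)$ trajectory is a feasible solution of the LMPC problem~\eqref{eq:FTOCP} at iteration $j$ whose cost already equals the optimal one; since $R \succ 0$ renders the LMPC objective strictly convex in the input sequence, this optimizer is unique and therefore the applied inputs satisfy $u_t^{j} = u_t^{j-1}$ for every $t$.

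With the full state--input trajectory repeated, the stored data feeding the convex safe set~\eqref{eq:CS} and the value function~\eqref{eq:Qfun} are unchanged, i.e. $\mathcal{CS}^{j} = \mathcal{CS}^{j-1}$ and $V^{j} = V^{j-1}$. Consequently the LMPC problem solved at iteration $j+1$ is identical to the one solved at iteration $j$, so the closed loop reproduces the same trajectory; by induction $\boldsymbol{x}^{j-1} = \boldsymbol{x}^{j} = \boldsymbol{x}^{j+1} = \cdots$ and likewise for the inputs. Hence the LMPC has converged to a fixed-point in the sense required by Theorem~\ref{th:optInf}, with $c = j-1$. Applying Theorem~\ref{th:optInf} gives $J^{j}_{0\rightarrow \infty}(x_S) = J^*_{0\rightarrow \infty}(x_S)$, and combined with the assumed equality this is exactly the second alternative.

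The hard part will be the bridging step: Lemma~\ref{lemma1} only certifies that the state trajectories agree, whereas Theorem~\ref{th:optInf} needs a genuine fixed-point of the whole state--input pair that persists across all subsequent iterations. Closing this gap is what forces the uniqueness-of-minimizer argument (relying on $R \succ 0$) to pin down the inputs, together with the observation that $\mathcal{CS}^j$ and $V^j$ are invariant under a repeated trajectory, which propagates the fixed-point forward. Once these two ingredients are in place, the dichotomy and the optimality conclusion follow directly from Propositions~\ref{prop2}, Lemma~\ref{lemma1} and Theorem~\ref{th:optInf}.
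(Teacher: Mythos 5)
Your proof is correct and follows essentially the same route as the paper: the non-increase from Proposition~\ref{prop2} gives the dichotomy, Lemma~\ref{lemma1} handles the equality branch, and Theorem~\ref{th:optInf} delivers optimality at the resulting fixed-point. Your additional step pinning down the input sequences via strict convexity in $u$ (so that $V^{j}=V^{j-1}$ genuinely holds and the fixed-point propagates to all subsequent iterations) fills in a detail the paper's own proof leaves implicit, but it is a refinement rather than a different approach.
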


\begin{proof}
From Proposition~\ref{prop2} we have that the iteration cost in non-increasing. Therefore, at each iteration either  $J^j_{0\rightarrow \infty}(x_S) < J^{j-1}_{0\rightarrow \infty}(x_S)$ or  $J^j_{0\rightarrow \infty}(x_S) = J^{j-1}_{0\rightarrow \infty}(x_S)$. Now consider the latter case, if $J^{j}_{0\rightarrow \infty}(x_S) = J^{j-1}_{0\rightarrow \infty}(x_S) $
then due to Lemma~\ref{lemma1} we have that ${\boldsymbol{x}^{j-1}} = {\boldsymbol{x}^{j}}$ which in turn implies $\mathcal{CS}^{j-1} = \mathcal{CS}^{j}$ and ${V}^{j-1} = {V}^{j}$. Consequently, ${\boldsymbol{x}^{j-1}} = {\boldsymbol{x}^{j}} = {\boldsymbol{x}^{j+1}} = \dots = {\boldsymbol{x}^{\infty}}$. Finally, Theorem~\ref{th:optInf} ensures that 
$J^{j-1}_{0\rightarrow \infty}(x_S) = \ldots = J^{\infty}_{0\rightarrow \infty}(x_S) = J^*_{0\rightarrow \infty}(x_S)$,
concluding the proof.
\end{proof}

\vspace{-0.4cm}

\section{Enlarging the Region of Attraction}
In this section, we present an iterative strategy that may be used to construct the safe set and the terminal cost when a first feasible trajectory is not given. This algorithm may also be utilized for enlarging the region of attraction associated with the controller when a first feasible trajectory is available.

First, we introduce the Region of Attraction (RoA) associated with the policy~\eqref{eq:policy}:
\begin{equation*}
\begin{aligned}
    \mathcal{C}^j=\{ x_0 \in \mathbb{R}^n~|& ~\exists \{\bar x_{k}\}_{k=0}^N, \{\bar u_{k}\}_{k=0}^{N-1} \text{ such that } \bar x_0= x_0,\\
    & \bar x_{k+1} = A\bar x_k + B \bar u_k, \bar x_k \in \mathcal{X}, \bar u_k \in \mathcal{U},\\
    &\forall k \in \{0,\ldots, N-1\}\text{ and } \bar x_{N} \in \mathcal{CS}^{j-1} \}.
\end{aligned}
\end{equation*}
By definition, for any initial condition $x_0 \in \mathcal{C}^j$ the closed-loop system~\eqref{eq:sys} and~\eqref{eq:policy} is asymptotically stable and the constraints from~\eqref{eq:constraints} are satisfied. Let 
    $\{v_1^j, \ldots, v_{n^j}^j\}$
be the $n^j$ vertices associated with the region of attraction $\mathcal{C}^j$. We notice that each vertex $v_i^j \in \mathcal{C}^j$ may be outside the the convex safe set $\mathcal{CS}^{j-1}$. Therefore, we can run a closed-loop simulation from each vertex $v_i^j$ to enlarge the convex safe set and, as a result, the region of attraction associated with the controller.
In Algorithm~\ref{algo:enlarge} we run closed-loop simulations from the vertices of $\mathcal{C}^j$, and we use the simulated data to  update the convex safe set $\mathcal{CS}^j$ and the $V$-function $V^j$. In Section~\ref{sec:resEnlarge}, we show that when a first feasible trajectory is not given, it is possible to set $\mathcal{CS}^0 =\{0\}$ and leverage Algorithm~\ref{algo:enlarge} to iteratively enlarge the region of attraction.

\floatname{algorithm}{Algorithm}
\begin{algorithm}[b]
  \caption{Domain Enlargement}\label{algo:enlarge}
  \begin{algorithmic}[1]
    \State \textbf{Input: } $M$
    \State Set $j=0$, $\mathcal{CS}^0 =\{0\}$, $V^0(x) = 0, \forall x \in \mathcal{CS}^0$
    \For{$i \in \{1, \ldots, M\}$} 
        \State Define the policy $\pi^j$ from~\eqref{eq:policy} using $\mathcal{CS}^{j-1}$ and $V^{j-1}$
        \State Compute the vertices $ \{v_1^j, \ldots, v_{n^j}^j\}$ of the set $\mathcal{C}^j$
        \For{$v \in \{v_1^j, \ldots, v_{n^j}^j\}$}
        \State Run a simulation setting $v$ as initial condition
    \State Construct $\mathcal{CS}^j$ and $V^j$ using data up to iteration $j$
    \State Set $j = j+1$
    \EndFor
    \EndFor
    \State \textbf{Outputs} $\mathcal{CS}^j$, $V^j$
  \end{algorithmic}
\end{algorithm}

\begin{remark}\label{rem:app} Computing the vertices of the region of attraction~$\mathcal{C}^j$ may be challenging. A computationally cheaper alternative is to replace the vertices $\{v_1^j, \ldots, v_{n^j}^j\}$ used in Algorithm~\ref{algo:enlarge} with the following points:
\begin{equation}\label{eq:optEnlarge}
\begin{aligned}
    \bar v_i^{j} = \argmax_{v\in\mathbb{R}^n} \quad & v^\top d_i \\
    \text{subject to} \quad & v \in \mathcal{C}^{j}, ~v^\top d_i^\perp = 0,
\end{aligned}
\end{equation}
where the set of user-specified vectors $\mathcal{D} = \{d_1, \ldots, d_n\}$ characterize the directions in which we wish to enlarge the RoA. In the above convex optimization problem $d_i^\perp \in \mathbb{R}^n$ denotes a vector orthogonal to $d_i \in \mathbb{R}^n$.
\end{remark}

\section{Results}\label{sec:results}
In this section, we test the LMPC
\footnote{Code is available at~{\tt \url{https://github.com/urosolia/LMPC_SimpleExample}.}} 
on the following infinite-time constrained linear quadratic regulator problem:
\begin{equation}\label{eq:infHorEx}
    \begin{aligned}
        J^*(x_S) = \min_{u_0, u_1, \ldots} \quad & \sum_{k=0}^\infty x_k^\top Q x_k + u_k^\top R u_k \\
        \text{subject to} \quad ~~ & x_{k+1} = \begin{bmatrix} 1 & 1 \\0 &1\end{bmatrix}x_k + \begin{bmatrix} 0 \\ 1 \end{bmatrix} u_k,~\forall k \geq 0, \\
        & ||x_k||_\infty \leq 15, || u_k ||\leq u_{\textrm{max}},~\forall k \geq 0,\\
        & x_0 = x_S,
    \end{aligned}
\end{equation}
where $Q=\textrm{diag}(1,1)$, $R=1$, and $x_S = [-14, 2]^\top$. 
In Section~\ref{sec:iterativeImp}, we initialize the LMPC algorithm with a suboptimal trajectory and we perform the regulation task until the closed-loop system converges to a fixed-point. Afterwards, we check if the LICQ condition from Assumption~\ref{ass:LICQ} is satisfied and we compare the steady-state solution with the optimal trajectory\footnote{The optimal trajectory is approximated by solving a finite time optimal control problem with a horizon $N=300$.}. In all tested scenarios, the LMPC converged in less than $20$ iterations. Finally, in Section~\ref{sec:resEnlarge} we do not assume that a first feasible trajectory is given, and we leverage Algorithm~\ref{algo:enlarge} to iteratively enlarge the region of attraction associated with the LMPC.

\begin{figure}[b]
\centering
\centering
\begin{tikzpicture}
\begin{axis}[
    xlabel=$x_1$,
    ylabel=$x_2$,
    ymin=0, 
    ymax=4.5,
    legend pos=south west,
    legend cell align=left,
    mark size=3.0pt,
    ]
  \pgfplotstableread{tikz/plot1_x_feasible.dat}{\dat}
  \addplot+ table [x={t}, y={y}] {\dat};
  \addlegendentry{First feasible trajectory}

  \pgfplotstableread{tikz/plot1_x_LMPC_N_3.dat}{\dat}
  \addplot+ table [x={t}, y={y}] {\dat};
  \addlegendentry{LMPC with $N=3$}

  \pgfplotstableread{tikz/plot1_x_LMPC_N_4.dat}{\dat}
  \addplot+ table [x={t}, y={y}] {\dat};
  \addlegendentry{LMPC with $N=4$}

  \pgfplotstableread{tikz/plot1_x_opt.dat}{\dat}
  \addplot+ table [x={t}, y={y}] {\dat};
  \addlegendentry{Optimal trajectory}
\end{axis}
\end{tikzpicture}
\caption{The figure shows the first feasible trajectory used to initialize the LMPC, the closed-loop trajectory at convergence, and the optimal solution. }\label{fig:ex1:closedLoop}
\end{figure}
\subsection{Iterative Improvement}\label{sec:iterativeImp}
First, we set $u_\textrm{max}=2$ and we synthesize the LMPC policy for $N=3$. At convergence the LICQ was satisfied for all $t\geq1$ and therefore the LMPC converged to the unique optimal solution to problem~\eqref{eq:infHorEx}. This fact is confirmed by Figure~\ref{fig:ex1:closedLoop}, where we reported the first feasible trajectory used to initialize the LMPC, the closed-loop trajectory at convergence, and the optimal solution. 
Notice that the closed-loop trajectory at convergence overlaps with the optimal solution to problem~\eqref{eq:infHorEx}.


In the second example, we set $u_\textrm{max}=1.5$ and we synthesize the LMPC policy for $N=3$ and $N=4$. Figure~\ref{fig:ex2:closedLoop} shows that for $N=4$ the closed-loop trajectory converges to the optimal solution to problem~\eqref{eq:infHorEx}. However, for $N=3$ the closed-loop system does not converge to the optimal closed-loop behavior. These results are in agreement with Theorem~\ref{th:newCond}. Indeed, Assumption~\ref{ass:LICQ} is satisfied for $N=4$, but it is not satisfied for $N=3$.
Tables~\ref{table:KKT_t0}--\ref{table:KKT_t1} show the KKT multipliers for problem~\eqref{eq:FiniteHorizonCompact} with $N=4$ at times $t=0$ and $t=1$. We notice that, as shown in the proof of the Theorem~\ref{th:newCond} (in particular in equation~\eqref{eq:KKT_equality}), we have that $\lambda_{k|0}^* = \lambda_{k|1}^*$ for all $k \in \{1,\ldots,4\}$ and $\delta^{*,\textrm{active}}_{1|0,0} = \delta^{*,\textrm{active}}_{1|1,0}$. 

\noindent
\begin{figure}[t]
\centering
\begin{tikzpicture}
\begin{axis}[
    xlabel=$x_1$,
    ylabel=$x_2$,
    ymin=0, 
    ymax=4.5,
    legend pos=south west,
    legend cell align=left,
    mark size=3.0pt,
    ]
  \pgfplotstableread{tikz/plot2_x_feasible.dat}{\dat}
  \addplot+ table [x={t}, y={y}] {\dat};
  \addlegendentry{First feasible trajectory}
 
  \pgfplotstableread{tikz/plot2_x_LMPC_N_3.dat}{\dat}
  \addplot+ table [x={t}, y={y}] {\dat};
  \addlegendentry{LMPC with $N=3$}

  \pgfplotstableread{tikz/plot2_x_LMPC_N_4.dat}{\dat}
  \addplot+ table [x={t}, y={y}] {\dat};
  \addlegendentry{LMPC with $N=4$}

  \pgfplotstableread{tikz/plot2_x_opt.dat}{\dat}
  \addplot+ table [x={t}, y={y}] {\dat};
  \addlegendentry{Optimal trajectory}
\end{axis}
\end{tikzpicture}
\vspace{-0.4cm}
\caption{The figure shows the first feasible trajectory used to initialize the LMPC, the optimal solution, and the closed-loop trajectory at convergence for $N=3$ and $N=4$. We notice that for $N=4$ the LMPC converges to the optimal behavior. }\label{fig:ex2:closedLoop}
\end{figure}


\begin{table}[h!]
\centering
\setlength\tabcolsep{2.5pt}
\begin{tabular}{ccccccc} \midrule
$\lambda_{0|0}^*$& $\lambda_{1|0}^*$& $\lambda_{2|0}^*$ & $\lambda_{3|0}^*$ &$\lambda_{4|0}^*$ &$\delta_{0|0,0}^{*, \textrm{active}}$&$\delta_{1|0,0}^{*, \textrm{active}}$\\ \midrule
$\begin{bmatrix} 82.21 \\ 74.70 \end{bmatrix}$        & $\begin{bmatrix} 54.21 \\ 24.49 \end{bmatrix}$         & $\begin{bmatrix}30.21 \\ 1.27 \end{bmatrix}$        & $\begin{bmatrix} 13.21 \\ -3.66 \end{bmatrix}$ & $\begin{bmatrix} 4.49 \\ -2.87 \end{bmatrix}$ & 21.49 & 0.66 \\ \midrule
\end{tabular}
\caption{KKT multipliers for problem~\eqref{eq:FiniteHorizonCompact} at time $t=0$ and for $N=4$}
\label{table:KKT_t0}
\end{table}
\vspace{-0.7cm}

\begin{table}[h!]
\centering
\setlength\tabcolsep{2.5pt}
\begin{tabular}{cccccc} \midrule
$\lambda_{1|1}^*$& $\lambda_{2|1}^*$& $\lambda_{3|1}^*$ & $\lambda_{4|1}^*$ &$\lambda_{5|1}^*$ &$\delta_{1|1,0}^{*, \textrm{active}}$ \\ \midrule
$\begin{bmatrix} 54.21 \\ 24.49 \end{bmatrix}$         & $\begin{bmatrix}30.21 \\ 1.27 \end{bmatrix}$        & $\begin{bmatrix} 13.21 \\ -3.66 \end{bmatrix}$ & $\begin{bmatrix} 4.49 \\ -2.87 \end{bmatrix}$ & $\begin{bmatrix} 1.04 \\ -1.52 \end{bmatrix}$ & 0.66 \\ \midrule
\end{tabular}
\caption{KKT multipliers for problem~\eqref{eq:FiniteHorizonCompact} at time $t=1$ and for $N=4$}
\label{table:KKT_t1}
\end{table}

\vspace{-0.4cm}

Finally, in Figure~\ref{fig:ex1:Input} we compare the LMPC input sequences at convergence for $N=4$ and $N=3$. 
At time $t=2$ the input is saturated and therefore the matrix $M_{t:t+2} \in \mathbb{R}^{7 \times 6}$ for $t\in \{1,2\}$ is not full row rank and the LICQ condition for $N=3$ is not fulfilled.
For this reason, the LMPC with horizon $N=3$ cannot explore the state space and the algorithm converges to a fixed-point which is not optimal.
Even though the solution is suboptimal with a prediction horizon equal to three, the sacrifice of optimality is subtle as the LICQ is satisfied for all $t \geq 3$. Specifically, when the prediction horizon $N=3$, the trajectory starting from $x_2^\infty$ is optimal, i.e., $J^*_{2\rightarrow \infty}(x_2^\infty)=J^\infty_{2\rightarrow \infty}(x_2^\infty)$.

\noindent
\begin{figure}
\centering
\begin{tikzpicture}
\begin{axis}[
    xlabel=Time step $k$,
    ylabel=$u_k$,
    ymin=-2,  
    ymax=2,
    xmin=0,
    xmax=16,
    legend pos=north east,
    legend cell align=left,
    mark size=3.0pt,
    ]
  \pgfplotstableread{tikz/plot3_u_LMPC_N_3.dat}{\dat}
  \addplot+ [mark=square*,mark options={fill=white}, Orange] table [x={t}, y={y}] {\dat};
  \addlegendentry{LMPC input for $N=3$}

  \pgfplotstableread{tikz/plot3_u_LMPC_N_4.dat}{\dat}
  \addplot+ [mark=diamond*,mark options={fill=white}, Purple] table [x={t}, y={y}] {\dat};
  \addlegendentry{LMPC input for $N=4$}

  \pgfplotstableread{tikz/plot3_u_opt.dat}{\dat}
  \addplot+ [mark=triangle*,mark options={fill=white}, Red] table [x={t}, y={y}] {\dat};
  \addlegendentry{Optimal input}

  \addplot [domain=0:16, black, ultra thick] {1.5};
  \addplot [domain=0:16, black, ultra thick] {-1.5};
  \addlegendentry{Input constraints}
\end{axis}
\end{tikzpicture}
\vspace{-0.5cm}
\caption{The figure shows the LMPC input sequences at convergence for $N=3$, $N=4$, and the optimal input sequence.}\label{fig:ex1:Input}
\end{figure}

\vspace{-0.6cm}

\subsection{Domain Enlargement}\label{sec:resEnlarge}
In this section, we initialize $\mathcal{CS}^0 = \{0\}$ and we leverage Algorithm~\ref{algo:enlarge} to iteratively enlarge the convex safe set $\mathcal{CS}^j$ and to construct the $V$-function. We test both Algorithm~\ref{algo:enlarge}, where the vertices of $\mathcal{C}^j$ are computed exactly, and a computationally cheaper alternative where 
instead of the vertices of $\mathcal{C}^j$ 
we used the points computed via problem~\eqref{eq:optEnlarge}
for $\mathcal{D}=\{ \pm [1, -0.3]^\top, \pm [1, -0.5]^\top, \pm [1, -0.7]^\top\}$. The set $\mathcal{D}$ is chosen to enlarge the RoA in the second and fourth quadrant.
Furthermore, we compare the region of attraction resulting from our approaches with an MPC controller, where the terminal constraint set is the maximal invariant set $\mathcal{O}_\infty$ associated with the LQR controller and the terminal cost is $x^\top P x$ for the matrix $P$ given by the solution of the discrete time Riccati equation. Figure~\ref{eq:regComp} shows the region of attractions for our strategies and the standard MPC approach. We notice that the region of attraction associated with the LMPC is larger.  This result is expected, as in the LMPC methodology we used a control invariant set as terminal constraint; whereas in the standard MPC approach we used the maximal control invariant associated with the LQR controller as terminal constraint and, as discussed in~\cite{grieder2004computation}, the size of the region of attraction is affected by the horizon length.

\begin{figure}
\centering
\begin{tikzpicture}
    \begin{axis}[
    xlabel=$x_1$,
    ylabel=$x_2$,
    ymin=-10,  
    ymax=15.5,
    xmin=-15.1,
    xmax=15.1,
    legend columns=2,
    legend pos=north east,
    legend cell align=left,
    mark size=3.0pt,
    ]
  \pgfplotstableread{tikz/Oinf_data.dat}{\dat}
  \addplot+ [Maroon] table {\dat};
  \addlegendentry{$\mathcal{O}_\infty$}

    \pgfplotstableread{tikz/K3LMPC_S2_data.dat}{\dat}
    \addplot+ [mark=triangle*,mark options={fill=white}, BurntOrange] table {\dat};
    \addlegendentry{LMPC RoA w/ Approx.}
  
    \pgfplotstableread{tikz/K3MPC_data.dat}{\dat}
    \addplot+ [NavyBlue] table {\dat};
    \addlegendentry{MPC RoA}
  
    \pgfplotstableread{tikz/K3LMPC_S1_data.dat}{\dat}
    \addplot+ [mark=square*,mark options={fill=white}, LimeGreen] table {\dat};
    \addlegendentry{LMPC RoA w/ Algo. 1}
  
  \pgfplotstableread{tikz/Cinf_data.dat}{\dat}
    \addplot+ [mark=*,mark options={fill=white, solid}, dashed, Black] table {\dat};
    \addlegendentry{$\mathcal{C}_\infty$}

\end{axis}
\end{tikzpicture}
\vspace{-0.9cm}
\caption{Comparison between the Regions of Attraction (RoA) associated with the MPC and the LMPC policy updated using Algorithm~\ref{algo:enlarge} (LMPC RoA w/ Algo. 1) and Algorithm~\ref{algo:enlarge} with the approximation described in Remark~\ref{rem:app} (LMPC RoA w/ Approx.). The figure shows also the set of stabilizable states $\mathcal{C}_\infty$. Notice that  $\mathcal{C}_\infty$ overlaps with the region of attraction associated with the proposed strategy, when Algorithm~\ref{algo:enlarge} is used for enlarging the LMPC~domain.}\label{eq:regComp}
\end{figure}


\vspace{-0.2cm}

\section{Conclusions}
In this paper, we presented a sufficient LICQ condition which guarantees strict performance improvement and optimality of the LMPC scheme. Compared to our previous work this condition can be easily checked and it holds for a larger class of systems. Furthermore, as demonstrated in simulations, our result can be used to adaptively select the LMPC prediction horizon to guarantee optimality of the closed-loop system at convergence.

\vspace{-0.8cm}

\section{Appendix}\label{sec:Appendix}
\subsection{Matrices from problem~\eqref{eq:FiniteHorizonCompact}}\label{sec:Appendix-Mat}
By definition we have that $Q_{N-1} = I_{N-1} \otimes \text{diag}(Q,R)$, $b_\textrm{eq} = \text{Vec}(x_t^\infty, 0, \ldots, 0, x_{t+T}^\infty)$, $b_{\textrm{in}} = \mathds{1}_{N-1} \otimes [b_x^\top, b_u^\top]^\top$, 
\begin{equation}\label{eq:Gpatter}
\begin{aligned}
     (\GeqN)^\top &= \begin{bmatrix*}[r]
     I_{n} &     -A^\top&          &    &     &    &        \\ 
       &     -B^\top&          &    &     &    &        \\
       &      I_{n}&   -A^\top&    &     &    &        \\
       &            &   -B^\top&    &     &    &        \\
       &            &          &    \ddots&    &        \\
       &            &          &          &   I_{n}& -A^\top\\
       &            &          &          &    & -B^\top\\
      \end{bmatrix*},\\
      (\FinN)^\top &= 
    I_{N-1} \otimes \text{diag}( F_{x}^\top,  F_{u}^\top).
\end{aligned}
\end{equation}
and
\begin{equation}\label{eq:Fpatter}
    \MatT^\top = \begin{bmatrix}(\GeqN)^\top & (\FinN)^\top \end{bmatrix}
\end{equation}

\vspace{-0.4cm}

\subsection{Proof of Theorem~\ref{th:newCond}}\label{sec:mainProof}
We start with a proof outline. First, we show that for all $t\in \{0, 1, \ldots \}$ the state-input sequences
\begin{equation}\label{eq:optimizerTime_t}
\begin{aligned}
    [x_t^\infty, \ldots, x_{t+N}^\infty] \text{ and } [u_t^\infty, \ldots, u_{t+N-1}^\infty]
\end{aligned}
\end{equation}
are the optimizer to problem $ J^{\scalebox{0.5}{LMPC},j}_{t\rightarrow t+N}(x_t^\infty)$ for $j \geq c$. Leveraging this result, we will show that the $N$-steps trajectories starting at time $t$ and $t+1$ can be combined to show optimality the $(N+1)$-steps trajectory $[x_t^\infty, \ldots, x_{t+N+1}^\infty]$. Thus, by induction we show optimality over an infinite horizon.

Recall that from Proposition~\ref{prop2} we have
\begin{equation}\label{eq:cost_bound}
\begin{aligned}
        J^{\scalebox{0.5}{LMPC},j}_{t\rightarrow t+N}(x_t^{j}) &\geq \sum_{k=t}^{\infty}h(x_k^j,u_k^j)+ \lim_{k\rightarrow \infty}J_{k\rightarrow k+N}^{\scalebox{0.4}{LMPC},j}(x_k^j).
\end{aligned}
\end{equation}
Furthermore, by definition~\eqref{eq:Qfun} the $V$-function is a lower-bound to the closed-loop cost, i.e., $V^j(x_t^j) \leq \sum_{k=t}^\infty h(x_k^j, u_k^j)$. Now notice that equation~\eqref{eq:cost_bound}, together with definition~\eqref{eq:Qfun} and the stability of the closed-loop system, implies that
\begin{equation*}
\begin{aligned}
    J^{\scalebox{0.5}{LMPC},c+1}_{t\rightarrow t+N}(&x_t^{c+1}) \geq \sum_{k=t}^{\infty}h(x_k^j,u_k^j) \\&= \sum_{k=t}^{t+N-1} h(x_k^{c+1}, u_k^{c+1}) + \sum_{k=t+N}^{\infty} h(x_k^{c+1}, u_k^{c+1})\\
    &\geq\sum_{k=t}^{t+N-1} h(x_k^{c+1}, u_k^{c+1}) + V^{c+1}(x_{t+N}^{c+1}).
\end{aligned}
\end{equation*}
We notice that by assumption $x_t^j=x^\infty_t$ for all iterations $ j \geq c$, therefore $V^{c+1}(x_{t+N}^{c+1})= V^{c}(x_{t+N}^{\infty})$ and 
\begin{equation*}
    J^{\scalebox{0.5}{LMPC},c+1}_{t\rightarrow t+N}(x_t^{c+1}) \geq \sum_{k=t}^{t+N-1} h(x_t^{\infty}, u_t^{\infty}) + V^{c}(x_N^{\infty}).
\end{equation*}
Finally, we notice that~\eqref{eq:optimizerTime_t} is a feasible solution to $ J^{\scalebox{0.5}{LMPC},c+1}_{t\rightarrow t+N}(x_t^{c+1})$ and achieves the above lower bound. Therefore, the positive definiteness of $R$  from Assumption~\ref{ass:costStrictCovexity} implies that the state-input sequences in~\eqref{eq:optimizerTime_t} are the unique optimal solution to problem $ J^{\scalebox{0.5}{LMPC},c+1}_{t\rightarrow t+N}(x_t^{c+1})$ for all $t\in \{0,1,\ldots\}$.

Next, we show that the following sequences of $N+1$ states and $N$ inputs $[x_0^\infty, \ldots, x_{N+1}^\infty] \text{ and } [u_0^\infty, \ldots, u_{N}^\infty]$
are optimal for problem~\eqref{eq:FiniteHorizon} defined for a horizon of $N+1$ steps. Due to Assumption~\ref{ass:LICQ}, the optimizer is a KKT point~\cite[p.~21]{fiacco1990nonlinear} and is uniquely defined by its multipliers. The following analysis is therefore built on the KKT system. 

As discussed, the state-input sequences in~\eqref{eq:optimizerTime_t} are optimal for problem $ J^{\scalebox{0.5}{LMPC},c+1}_{t\rightarrow t+N}(x_t^{c+1})$, for all $t\in \{0,1,\ldots\}$. Therefore, the vector $\boldsymbol{z}_{t:t+N-1}^\infty = \text{Vec}(x_t^\infty,u_t^\infty,\ldots, x_{t+N-1}^\infty, u_{t+N-1}^\infty)$
is optimal for problem $P_{t\rightarrow t+N}^*( x_{t}^\infty, x_{t+N}^{\infty})$ from ~\eqref{eq:FiniteHorizonCompact}
and there exists a set of multipliers such that the following necessary optimality condition is satisfied:
\begin{equation}\label{eq:Stationarity}
    \MatT^\top \begin{bmatrix*}[l] \LagEq_{t:t+N-1|t} \\ \LagIn_{t:t+N-1|t} \end{bmatrix*} = -2 Q_{N-1}  \boldsymbol{z}^\infty_{t:t+N-1} .
\end{equation}
In the above equation, the matrix $\MatT^\top$ collects the gradient of the active constraints as defined in~\eqref{eq:Mdef}. The vector $\LagEq_{t+N-1|t} = \text{Vec}( \lambda_{t|t}^*, \ldots, \lambda_{t+N-1|t}^*)$ collects the KKT multipliers associated with the equality constraints. 
Moreover, the KKT multipliers associated with the active inequality constraints at time $k$ are stacked into the vector $\LagIn_{k|t}$,  whose entries are $\delta^{*,\textrm{active}}_{k|t,i}$ for each index $i$ from the index set $\mathcal{A}_{k|t}$ associated with the active inequality constraints. 
Similarly, the index set for the inactive constraints and their multipliers are denoted by $\mathcal{I}_{k|t}$ and $\delta^{*,\textrm{inactive}}_{k|t}$.

Now, we construct the KKT multipliers for problem $P_{t+1\rightarrow t+N}^*( x_{t+1}^\infty, x_{t+N}^{\infty})$ defined over a horizon of $N-1$ steps. Notice that the matrix $M^\top_{t+1:t+N-1}$ is obtained from the matrix $M^\top_{t:t+N-1}$ by removing the rows and columns related to
the equality constraints and the active inequality constraints at time $t$ in problem $P_{t\rightarrow t+N}^*( x_t^\infty, x_{t+N}^{\infty})$ defined over $N$ steps. 
Therefore, by definitions~\eqref{eq:Gpatter}--\eqref{eq:Fpatter} and equation~\eqref{eq:Stationarity} we have
\begin{equation}\label{eq:OptimizerShrinkStart}
        \MatTmp^\top \begin{bmatrix*}[l] \LagEq_{t+1:t+N-1|t} \\ \LagIn_{t+1:t+N-1|t} \end{bmatrix*} = -2 Q_{N-2}  \boldsymbol{z}^\infty_{t+1:t+N-1} ,
\end{equation}
where the cost matrix $Q_{N-2} = I_{N-2} \otimes \text{diag}(Q, R)$, $\LagEq_{t+1:t+N-1|t} = \text{Vec}(\lambda^*_{t+1|t}, \ldots, \lambda^*_{t+N-1|t})$ and  $\LagIn_{t+1:t+N-1|t}$ collects the multipliers $\delta^{*,\mathrm{active}}_{k|t, i}$, for all $i \in \mathcal{A}_{k|t}$ and for all $k\in\{t+1,\ldots, t+N-1\}$. Equation~\eqref{eq:OptimizerShrinkStart} implies that a subset of the KKT multipliers from~\eqref{eq:Stationarity}, which are associated with problem~$P_{t\rightarrow t+N}^*( x_t^\infty, x_{t+N}^{\infty})$ defined over a horizon of $N$ steps, 
can be used to show optimality of the vector 
\begin{equation}\label{eq:zOpt}
    \boldsymbol{z}_{t+1:t+N-1}^\infty = \text{Vec}( x_{t+1}^\infty,u_{t+1}^\infty,\ldots, x_{t+N-1}^\infty, u_{t+N-1}^\infty)
\end{equation} 
for problem $P_{t+1\rightarrow t+N}^*( x_{t+1}^\infty, x_{t+N}^{\infty})$ defined over a horizon of $N-1$ steps.

In the following we show optimality of  above vector $\boldsymbol{z}^\infty_{t+1:t+N-1}$ using the KKT multipliers associated with problem $P_{t+1\rightarrow t+N+1}^*( x_{t+1}^\infty, x_{t+N+1}^{\infty})$ defined over $N$ steps.
As discussed, the state-input sequences~\eqref{eq:optimizerTime_t} for $t=t+1$ are optimal for $J^{\scalebox{0.5}{LMPC},c+1}_{t+1\rightarrow N+1}(x_{t+1}^\infty)$ and consequently
$\boldsymbol{z}_{t+1:t+N}^\infty = \text{Vec}(x_{t+1}^\infty,u_{t+1}^\infty,\ldots, x_{t+N}^\infty, u_{t+N}^\infty)$
is optimal for problem $P_{t+1\rightarrow t+N+1}^*( x_{t+1}^\infty, x_{t+N+1}^{\infty})$. Therefore, there exists a set of multipliers such that the stationarity condition is satisfied: 
\begin{equation*}
    M_{t+1:t+N}^\top \begin{bmatrix*}[l] \LagEq_{t+1:t+N|t+1} \\ \LagIn_{t+1:t+N|t+1} \end{bmatrix*} = -2 Q_{N-1} \boldsymbol{z}^\infty_{t+1:t+N} .
\end{equation*}
The above condition implies that
\begin{equation}\label{eq:OptimizerShrinkEnd}
    M_{t+1:t+N-1}^\top \begin{bmatrix*}[l] \LagEq_{t+1:t+N-1|t+1} \\ \LagIn_{t+1:t+N-1|t+1} \end{bmatrix*} =-2 Q_{N-2} \boldsymbol{z}^\infty_{t+1:t+N-1}.
\end{equation}
Therefore, the vectors of KKT multipliers $\LagEq_{t+1:t+N-1|t+1}$ and $\LagIn_{t+1:t+N-1|t+1}$ can be used to show optimality of the state-input vector $\boldsymbol{z}^\infty_{t+1:t+N-1}$ from~\eqref{eq:zOpt} for problem $P_{t+1\rightarrow t+N+1}^*( x_{t+1}^\infty, x_{t+N+1}^{\infty})$ defined over $N-1$ steps.

Finally, we show that the KKT multipliers from~\eqref{eq:OptimizerShrinkStart} and~\eqref{eq:OptimizerShrinkEnd} can be combined to prove optimality of the $(N+1)$-steps trajectory $[x_t^\infty, \ldots, x_{t+N+1}^\infty]$. From Assumption~\ref{ass:LICQ}, we have that $M_{t+1:t+N-1}^\top$ is full column rank. Therefore, from equations~\eqref{eq:OptimizerShrinkStart} and~\eqref{eq:OptimizerShrinkEnd} we have that
\begin{equation}
\label{eq:KKT_equality}
         \!\begin{rcases}
     \mathcal{A}_{k|t}=\mathcal{A}_{k|t+1},~\mathcal{I}_{k|t}=\mathcal{I}_{k|t+1}\\
    {\lambda}_{k|t}^* = \lambda_{k|t+1}^*\\
    \delta_{k|t,i}^{*,\mathrm{active}}  = \delta_{k|t+1,i}^{*,\mathrm{active}}, \forall i \in \mathcal{A}_{k|t},\\
    \delta_{k|t,i}^{*,\mathrm{inactive}}  = \delta_{k|t+1,i}^{*,\mathrm{inactive}}, \forall i \in \mathcal{I}_{k|t}.\\
    \end{rcases}\!\forall k\!\in\!\{t\!+\!1,\dots,t\!+\!N\!-\!1\} \\.
\end{equation}
 The above equation implies that the KKT multipliers $\boldsymbol{\bar \lambda}_{t:t+N}$ and $\boldsymbol{\bar \delta}_{t:t+N} = [\boldsymbol{\bar \delta}^\textrm{active}_{t:t+N},\boldsymbol{\bar \delta}^\textrm{inactive}_{t:t+N}]$ with entries
\begin{equation*}\label{eq:KKT_multipliers}
\begin{aligned}
    &\bar{\lambda}_{t|t} = \lambda_{t|t}^*\\
    &\bar{\delta}_{t|t,i}^\mathrm{active}  = \delta_{t|t,i}^{*,\mathrm{active}},  \forall i \in \mathcal{A}_{t|t}\\
    &\bar{\delta}_{t|t,i}^\mathrm{inactive}  = \delta_{t|t,i}^{*,\mathrm{inactive}},  \forall i \in \mathcal{I}_{t|t}\\
    &\!\begin{rcases}
    \bar{\lambda}_{k|t} = \lambda_{k|t}^*\\
    \bar{\delta}_{k|t,i}^\mathrm{active}  = \delta_{k|t,i}^{*,\mathrm{active}}, \forall i \in \mathcal{A}_{k|t},\\
    \bar{\delta}_{k|t,i}^\mathrm{inactive}  = \delta_{k|t,i}^{*,\mathrm{inactive}}, \forall i \in \mathcal{I}_{k|t},\\
    \end{rcases}\!\forall k\in\{t\!+\!1,\dots,t\!+\!N\!-\!1\}\\
    &\bar{\lambda}_{t+N|t} = \lambda_{t+N|t+1}^*\\
    &\bar{\delta}_{t+N|t,i}^\mathrm{active}  = \delta_{t+N|t+1,i}^{*,\mathrm{active}},  \forall i \in \mathcal{A}_{t+N|t+1}\\
    &\bar{\delta}_{t+N|t,i}^\mathrm{inactive}  = \delta_{t+N|t+1,i}^{*,\mathrm{inactive}},  \forall i \in \mathcal{I}_{t+N|t+1}\\
    \end{aligned}
\end{equation*}
satisfy the stationarity condition
\begin{equation}\label{eq:stat}
    M_{t:t+N}^\top \begin{bmatrix*} \boldsymbol{\bar \lambda}_{t:t+N} \\
    \boldsymbol{\bar \delta}^{*,\mathrm{active}}_{t:t+N}\end{bmatrix*}    = 2Q_{N} \boldsymbol{z}_{t:t+N}^\infty.
\end{equation}
By definition $\boldsymbol{\bar \delta}_{t:t+N}$ is dual feasible and $(\boldsymbol{z}_{t:t+N}^\infty, \boldsymbol{\bar \delta}_{t:t+N})$ satisfies the complementarity conditions. Therefore, equation~\eqref{eq:stat} implies that the feasible state-input vector $\boldsymbol{z}_{t:t+N}^\infty = \text{Vec}(x_t^\infty, u_t^\infty, \ldots, x_{t+N}^\infty, u_{t+N}^\infty)$ is optimal for problem~\eqref{eq:FiniteHorizonCompact} at time $t$ with horizon $N+1$. In conclusion, we have that $(\boldsymbol{x}^\infty_{t:t+N+1}, \boldsymbol{u}^\infty_{t:t+N})$ is the optimizer to problem~\eqref{eq:FiniteHorizon}  with horizon $N+1$. 

Now, we notice that from Assumption~\ref{ass:LICQ} the matrix $M_{t:t+T}$ is full row rank for all $t\geq1$ and $T\geq N-1$. By induction, the above argument can be iterated to show that $\boldsymbol{z}_{t:t+T}^\infty$ is the optimal solution to problem~\eqref{eq:FiniteHorizonCompact} for all $t\geq0$ and for all $T\geq N$. Consequently, $(\boldsymbol{x}^\infty_{t:t+T}, \boldsymbol{u}^\infty_{t:t+T-1})$ is the optimizer to problem~\eqref{eq:FiniteHorizon} for all $t\geq0$ and for all $T\geq N$. Finally, by standard dynamic programming arguments  $(\boldsymbol{x}^\infty_{t:t+T}, \boldsymbol{u}^\infty_{t:t+T-1})$ is the optimizer to problem~\eqref{eq:FiniteHorizon} for all $t\geq0$ and for all $T < N$, which concludes the proof.

\bibliographystyle{IEEEtran}
\bibliography{IEEEabrv,mybib}

\end{document}